\numberwithin{equation}{section}
\numberwithin{figure}{section}
\newtheorem{theorem}{Theorem}[section]
\newtheorem*{maintheorem}{Main Theorem}
\newtheorem{lemma}[theorem]{Lemma}
\newtheorem{corollary}[theorem]{Corollary}
\newtheorem{proposition}[theorem]{Proposition}
\theoremstyle{definition}
\newtheorem{remark}[theorem]{Remark}
\newtheorem{remarks}[theorem]{Remarks}
\newtheorem{example}[theorem]{Example}
\newtheorem{definition}[theorem]{Definition}
\def\Z{\ensuremath{\mathbb{Z}}}
\def\R{\ensuremath{\mathbb{R}}}
\newcommand{\pa}[1]{\left(#1\right)}
\newcommand{\cpa}[1]{\left\{#1\right\}}
\newcommand{\wt}[1]{\widetilde{#1}}
\newcommand{\tn}[1]{\textnormal{#1}}
\newcommand{\im}[1]{\tn{Im} \, #1}
\newcommand{\Int}[1]{\tn{Int} \, #1}
\newcommand{\la}[2]{\mathbb{L}\pa{#1,#2}}
\newcommand{\ci}[3]{H^{#1}_{\infty}\pa{#2;#3}}
\newcommand{\rci}[3]{\wt{H}^{#1}_{\infty}\pa{#2;#3}}
\newcommand{\rc}[3]{\wt{H}^{#1}\pa{#2;#3}}
\newcommand{\rh}[3]{\wt{H}_{#1}\pa{#2;#3}}
\newcommand{\ilim}{\mathop{\varinjlim}\limits}
\newcommand{\csi}{\mathbin{\natural}}
\newcommand{\cs}{\mathbin{\#}}
\newcommand{\csb}{\mathbin{\#_\partial}}
\renewcommand{\hom}[3]{\tn{Hom}_{#1}\pa{#2,#3}}
\font\cuf=cmtt8
\newcommand{\curl}[1]{{\cuf #1}}
\begin{document}
\title{Connected sum at infinity and $4$-manifolds}

\author[J.~Calcut]{Jack S. Calcut}
\address{Department of Mathematics\\
         Oberlin College\\
         Oberlin, OH 44074}
\email{jcalcut@oberlin.edu}
\urladdr{\href{http://www.oberlin.edu/faculty/jcalcut/}{\curl{http://www.oberlin.edu/faculty/jcalcut/}}}

\author[P.~Haggerty]{Patrick V. Haggerty}
\address{Department of Mathematics\\
         Oberlin College\\
         Oberlin, OH 44074}
\email{phaggert@oberlin.edu}

\keywords{Connected sum at infinity, end sum, ladder manifold, cohomology algebra at infinity, proper homotopy, direct limit, stringer sum, lens space.}
\subjclass[2010]{Primary 57R19; Secondary 55P57.}
\date{\today}

\begin{abstract}
We study connected sum at infinity on smooth, open manifolds.  This operation requires a choice of proper ray in each manifold summand.  In favorable circumstances, the connected sum at infinity operation is independent of ray choices.  For each $m\geq 3$, we construct an infinite family of pairs of $m$-manifolds on which the connected sum at infinity operation yields distinct manifolds for certain ray choices.  We use cohomology algebras at infinity to distinguish these manifolds.
\end{abstract}

\maketitle

\section{Introduction}
\label{sec:introduction}

There exist several natural operations for combining manifolds.
These include classical connected sum (CS), classical connected sum boundary (CSB), and the less familiar connected sum at infinity (CSI).
CSI is roughly what happens to manifold interiors under CSB.

The CSI operation, also called end sum, was introduced by Gompf (1985) \cite{gompf} to study smooth manifolds homeomorphic to $\R^4$.
CSI is now a major tool for studying exotic smooth structures on open $4$-manifolds \cite[\S9.4]{gs}, \cite{gompf13}.
It was also used by Ancel (1980s, unpublished) to study Davis manifolds and by Tinsley and Wright (1997) and by Myers (1999)~\cite{myers} to study $3$-manifolds.
Recently, Calcut, King, and Siebenmann (2012)~\cite{cks} gave a general treatment of CSI that yielded a natural proof of the Cantrell-Stallings hyperplane unknotting theorem.

Each of the above operations involves some choices.
Under mild restrictions, CS and CSB are independent of the choices~\cite[\S2]{cks}.
CSI requires a choice of proper ray in each manifold summand.
As a ray knots in $\R^m$ if and only if $m=3$, it is not surprising that the result of CSI depends on ray choices in dimension 3.
In fact, one may construct such examples where $\R^3$ is summed with itself for various rays (see Myers~\cite{myers} and the appendix below).

For one-ended manifolds of dimension $m\geq 4$, the binary CSI operation yields a unique manifold up to diffeomorphism provided either:
\begin{enumerate}
\item\label{eq:cbs} One summand is smoothly collared at infinity by $S^{m-1}$.

\item\label{eq:ML} Both summands satisfy the Mittag-Leffler condition. The Mittag-Leffler condition holds on a manifold $M$, for instance, if: (i) $M$ is topologically collared at infinity or (ii) $M$ admits an exhausting Morse function with only finitely many coindex 1 critical points.

\end{enumerate}
Proofs of these two statements will appear in a subsequent paper.

The main purpose of this paper is to prove the following, 
which answers affirmatively a conjecture of Siebenmann \cite{cks}.

\begin{maintheorem}
    There exist infinitely many pairs $M$ and $N$ of open, one-ended 4-manifolds such that ray choice alters the proper homotopy type of the CSI of $M$ and $N$.
\end{maintheorem}

In our explicit examples, one CSI summand is collared at infinity and thus contains a unique ray up to ambient isotopy.
So, ray choice is relevant even in just one summand.
In view of \ref{eq:cbs} and \ref{eq:ML},
our examples are, in a sense, the simplest possible.

The question arises: given a cardinal number $c$, does there
exist an open, one-ended $4$-manifold $M$ such that the CSI of $M$ with itself yields at least $c$ manifolds up to proper homotopy?
In Section~\ref{sec:generalization}, we exhibit an infinite collection of manifolds answering this question in the affirmative for each at most countably infinite $c$.
We conjecture that this question has an affirmative answer when $c$ is uncountable.

In each example used to prove the Main Theorem, our ray choices do not alter the homotopy type of the CSI sum.
We conjecture that there exist open, one-ended manifolds $M$ and $N$ such that ray choice alters the homeomorphism type but not the proper homotopy type of the CSI of $M$ and $N$.  Further, we conjecture that there exist open, one-ended $4$-manifolds $M$ and $N$ such that ray choice alters the diffeomorphism type but not the homeomorphism type of the CSI of $M$ and $N$.  A possible candidate is the CSI of a ladder manifold (as defined in Section~\ref{sec:ladders}) and some exotic $\R^4$, although distinguishing the resulting manifolds up to diffeomorphism seems to be beyond present $4$-manifold technology.

This paper is organized as follows.
Section~\ref{sec:notation} defines CSI and fixes some notation.
Section~\ref{sec:ladders} introduces ladder manifolds and computes their cohomology algebras at infinity.
Section~\ref{sec:stringer_sum} defines stringer sum, an operation related to CSI.
Section~\ref{sec:lens} studies stringer sum for ladder manifolds based on lens spaces.
Section~\ref{sec:proof} proves the Main Theorem.
Section~\ref{sec:generalization} presents various ways of generalizing our examples, including a proof of the Main Theorem in each dimension at least 3.
We close with an appendix on various 3-dimensional results.

\section{Notation and Definitions}
\label{sec:notation}

Throughout, spaces are assumed to be metrizable, separable, and either compact or one-ended ($\R$ excepted).
Manifolds are assumed to be smooth, connected, and oriented.
Manifold boundaries are oriented by the standard outward normal first convention.
A manifold without boundary is \emph{closed} if it is compact and is \emph{open} if it is noncompact.
Write $A \approx B$ to mean $A$ is diffeomorphic to $B$ (not necessarily preserving orientation).
A map is \emph{proper} provided the inverse image of each compact set is compact.
A \emph{ray} is a proper embedding of $[0,\infty)$, where $[0,\infty) \subset \R$ is standardly oriented \cite[Ch.3]{gp}.

\begin{definition}[Connected Sum at Infinity]\label{def:CSI}
    Let $M$ and $N$ be open manifolds of the same dimension $m\geq 2$.  Fix rays $r\subset M$ and $r'\subset N$.  Form the \emph{connected sum at infinity} (CSI) of $(M,r)$ and $(N,r')$, denoted $(M,r) \csi (N,r')$, as follows.  Let $\nu r \subset M$ and $\nu r' \subset N$ be smooth, closed regular neighborhoods of $r$ and $r'$ respectively.  Identify $M - \Int \nu r$ and $N - \Int \nu r'$ along $\partial \nu r \approx \R^{m-1}$ and $\partial \nu r'\approx \R^{m-1}$ via an orientation reversing diffeomorphism $\phi$ as in Figure~\ref{fig:csi_sum}.
\end{definition}

\begin{figure}[htbp!]
    \centerline{\includegraphics[scale=1.0]{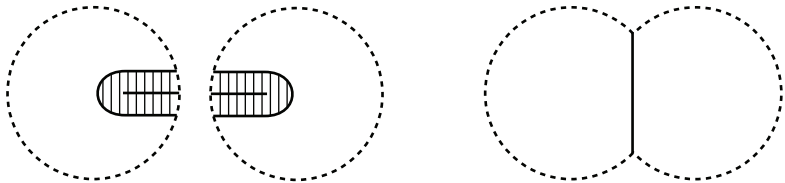}}
    \caption{CSI summands $(M,r)$ and $(N,r')$ with $\nu r$ and $\nu r'$ hatched (left). Result of CSI operation $(M,r)\csi (N,r')$ (right).}
\label{fig:csi_sum}
\end{figure}

\begin{remarks}
\noindent
\begin{enumerate}[label=(\arabic*),leftmargin=*]\setcounter{enumi}{0}
\item By a common abuse, we consider the manifold $(M,r)\csi (N,r')$ to be smooth (see Hirsch \cite[p.~184]{hirsch}).

\item Any diffeomorphism of $\R^{m-1}$ is isotopic to a linear mapping \cite[p.~34]{milnor}.
Together with uniqueness of regular neighborhoods \cite[\S3]{cks}, this shows that the diffeomorphism type of $(M,r) \csi (N,r')$ is independent of the choices of $\nu r$, $\nu r'$, and $\phi$.

\item The given definition of CSI is just sufficient for our purposes.  It is subsumed by a more general definition \cite{cks} that: (i) applies to differentiable, piecewise linear, and topological manifolds, (ii) yields a manifold/ray pair, (iii) is defined on any countable number of summands (see also Gompf \cite{gompf}), and (iv) is commutative and associative.
\end{enumerate}
\end{remarks}

We will use cohomology algebras at infinity to distinguish manifolds.
Just as cohomology is a homotopy invariant of spaces, the cohomology algebra at infinity is a proper homotopy invariant of spaces \cite[Ch.~3]{hr}.
Throughout, let $R$ be a commutative, unital ring.
If $X$ is any topological space, then we define the poset $(\mathcal{K},\leq)$ where $\mathcal{K}$ is the set of compact subsets of $X$ and $K\leq K'$ means $K\subseteq K'$.
We have a direct system of graded $R$-algebras $H^*(X - K;R)$, where $K\in\mathcal{K}$.
The morphisms of this direct system are restrictions.
Define $\ci{\ast}{X}{R}$, the \emph{cohomology algebra at infinity}, to be the direct limit of this system.
Similarly, we define $\rci{\ast}{X}{R}$ using reduced cohomology.

If $K_1 \subseteq K_2 \subseteq \cdots$ is a compact exhaustion of $X$, then we may compute $\ci{\ast}{X}{R}$ using the direct system indexed by the $K_j$. Namely,
\begin{equation}
    \ci{\ast}{X}{R} \cong \ilim_j H^*(X - K_j;R).
\end{equation}\label{eq:direct_limit}

We employ the standard explicit model of the direct limit where an element of $\ci{\ast}{X}{R}$ is represented by an element of $H^*(X - K; R)$ for some compact $K$.
Two representatives $\alpha \in H^*(X - K; R)$ and $\alpha' \in H^*(X - K'; R)$ are equivalent if they have the same restriction in some $H^*(X - K'';R)$, where $K,K'\subseteq K''$.

\section{Ladder Manifolds}
\label{sec:ladders}

In this section, we define ladder manifolds and compute their cohomology algebras at infinity.  
Ladder manifolds play a key role in our proof of the Main Theorem.
Fix closed manifolds $X$ and $Y$ of the same dimension $n\geq 2$.

\begin{definition}[Ladder Manifold]\label{def:ladder_manifold}
	The \emph{ladder manifold} of $X$ and $Y$, denoted $\la{X}{Y}$, is the oriented $(n+1)$-manifold obtained from the disjoint union
	\[
		([0,\infty)\times X) \sqcup ([0,\infty)\times Y)
	\]
	by performing countably many oriented $0$-surgeries as in Figure~\ref{fig:ladder_manifold}.
	\begin{figure}[htbp!]
    \centerline{\includegraphics[scale=1.0]{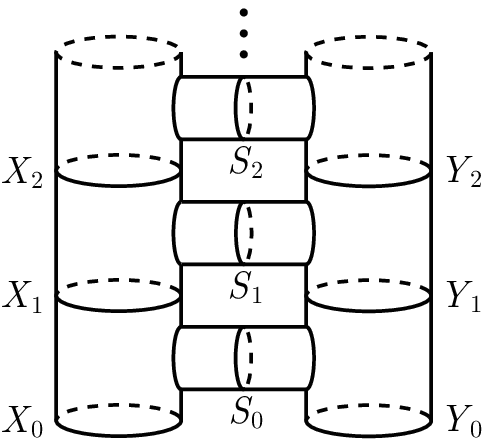}}
    \caption{Ladder manifold $\la{X}{Y}$.}
\label{fig:ladder_manifold}
\end{figure}
The manifolds \hbox{$[0,\infty) \times X$} and $[0,\infty) \times Y$ are the \emph{stringers}. Let $X_t:=\{t\}\times X$ and $Y_t := \{t\}\times Y$.  The glued-in copies of $D^1\times S^n$ are the \emph{rungs}, one for each integer $j\geq 0$. Let $S_j := \{0\}\times S^n$ be the central sphere in the $j$th rung.

	More explicitly, fix closed $n$-balls $B_X \subset X$ and $B'_X \subset \Int B_X$, and similarly for $Y$.  For each integer $j\geq 0$, perform an oriented $0$-surgery using $(n+1)$-disks, one in $\Int\pa{[j,j+1]\times B'_X}$ (see Figure~\ref{fig:surgery}) and the other in $\Int\pa{[j,j+1]\times B'_Y}$.

\begin{figure}[htbp!]
    \centerline{\includegraphics[scale=1.0]{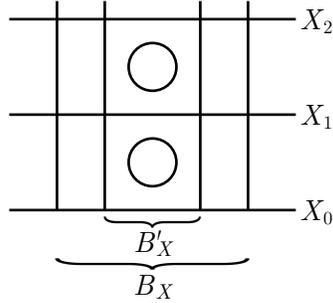}}
    \caption{Circles indicating disks in $[0,\infty)\times X$ used for $0$-surgeries.}
\label{fig:surgery}
\end{figure}

The ladder manifold $\la{X}{Y}$ is canonically oriented.
As $X$ and $Y$ are oriented, the stringers $[0,\infty) \times X$ and $[0,\infty)\times Y$ are each given the product orientation.
Note that $\la{X}{Y}$ has boundary $X_0 \sqcup Y_0$, oriented as $-X_0 - Y_0$ \cite[Ch.~3]{gp}.
Orient each $S_j$ so that the oriented boundary of the first cobordism in Figure~\ref{fig:cobordism} is $X_{j+1} - X_j + S_j$.  It follows that the oriented boundary of the second cobordism in Figure~\ref{fig:cobordism} is $Y_{j+1} - Y_j - S_j$.  This completes our description of the ladder manifold $\la{X}{Y}$.	
\end{definition}

\begin{figure}[htbp!]
    \centerline{\includegraphics[scale=1.0]{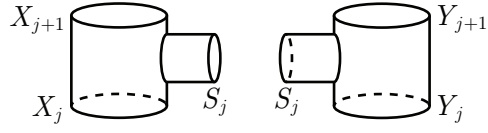}}
    \caption{Two oriented cobordisms in $\la{X}{Y}$.}
\label{fig:cobordism}
\end{figure}

The remainder of this section is devoted to computing the cohomology algebra at infinity of $\la{X}{Y}$.  For each integer $j\geq 0$, let $W_j$ be the submanifold of $\la{X}{Y}$ consisting of points of height $t\geq j$ (height is depicted vertically in Figure~\ref{fig:ladder_manifold}).
Note that $W_0=\la{X}{Y}$, $W_j\approx\la{X}{Y}$ for each $j$,
\[
	W_0 \supset W_1 \supset W_2 \supset \cdots
\]
and $\bigcap_j W_j = \emptyset$.
The inclusions $i_j \colon W_{j+1} \to W_{j}$ induce the direct system
\begin{equation}\label{eq:direct_system}
	\rc{\ast}{W_0}{R} \stackrel{i^*_0}{\longrightarrow} \rc{\ast}{W_1}{R} \stackrel{i^*_1}{\longrightarrow} \rc{\ast}{W_2}{R} \stackrel{i^*_2}{\longrightarrow} \cdots
\end{equation}
Evidently, 
\begin{equation}\label{eq:direct_limit_2}
	\rci{\ast}{\la{X}{Y}}{R} \cong \ilim_j \wt{H}^*(W_j;R).
\end{equation}

Let $J \subset \la{X}{Y}$ be the noncompact $n$-complex shown in Figure~\ref{fig:jacobs_ladder}.  It is an iterated wedge of $n$-spheres (the $S_j$'s from above), $1$-spheres (the $T_j$'s shown), and a 1-cell (bottom).  The complex $J$ is a variant of Jacob's ladder \cite[p.~25]{hr}.

\begin{figure}[htbp!]
    \centerline{\includegraphics[scale=1.0]{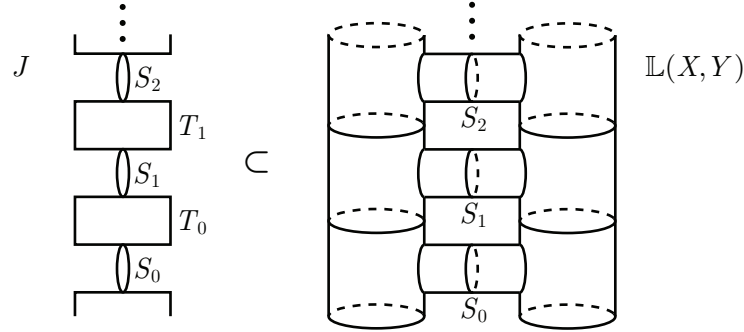}}
    \caption{One-ended $n$-complex $J\subset\la{X}{Y}$.}
\label{fig:jacobs_ladder}
\end{figure}

We remind the reader that $R$ denotes a commutative, unital ring.  Recall that $R[x] \cong \bigoplus_{n=0}^\infty R$ and $R[[x]] \cong \prod_{n=0}^\infty R$ as $R$-modules.  In general, $R[x]$ is a free $R$-module, but $R[[x]]$ need not be.  When $R$ is a field, $R[[x]]$ is an $R$-vector space and hence a free $R$-module.  However, $\Z[[x]]$ is \emph{not} a free $\Z$-module \cite{schroeer}.

The nonzero reduced integer homology groups of $J$ are $\wt{H}_n(J) \cong \Z[s]$ and $\wt{H}_1(J) \cong \Z[t]$, where $s^k$ corresponds to the fundamental class of $S_k$ and $t^k$ corresponds to the fundamental class of $T_k$.  By the Universal Coefficient Theorem, the nonzero reduced cohomology groups of $J$ are 
\begin{align*}
	\rc{n}{J}{R}	&\cong	\hom{\Z}{\Z[s]}{R} \cong R[[\sigma]]\\
	\rc{1}{J}{R}	&\cong	\hom{\Z}{\Z[t]}{R} \cong R[[\tau]]
\end{align*}
since the Ext terms vanish in all dimensions.  All cup products in $\rc{\ast}{J}{R}$ vanish.

For each integer $j\geq 0$, let $J_j$ denote the points in $J$ of height $t\geq j$.
Note that $J_0 = J$ and $J_j$ is homeomorphic to $J$ for each $j$.
Define
\[
	V_j := (X_j \vee J_j) \vee Y_j \subset W_j
\]
as shown in Figure~\ref{fig:retract} where $\iota_j \colon V_j \to W_j$ is inclusion.

\begin{figure}[htbp!]
    \centerline{\includegraphics[scale=1.0]{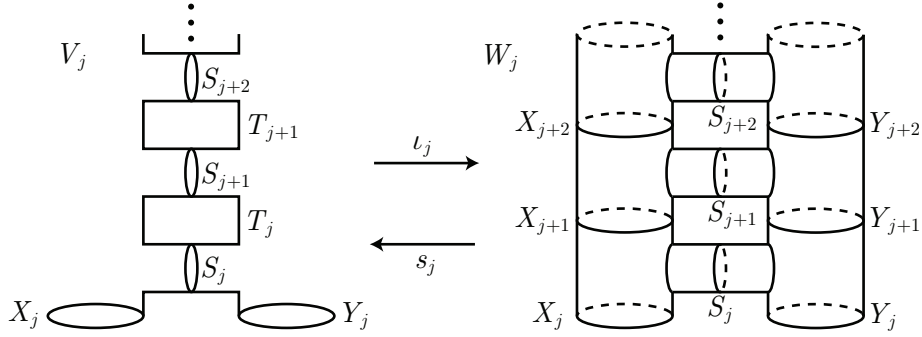}}
    \caption{Strong deformation retract $V_j$ of $W_j$.}
\label{fig:retract}
\end{figure}

\begin{lemma}\label{lem:sdr}
For each $j$, there is a strong deformation retraction $s_j\colon W_j \to V_j$.
\end{lemma}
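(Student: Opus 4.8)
The plan is to build $s_j$ explicitly and in stages, exploiting the product structure of the stringers away from the surgery sites and the standard retraction of a rung onto its core near them. First I would reduce to $j=0$. The height-shift $(t,p)\mapsto(t+j,p)$ defines a diffeomorphism $\la{X}{Y}=W_0\to W_j$ carrying each surgery band to a surgery band, hence carrying $X_0$, $J_0=J$, and $Y_0$ onto $X_j$, $J_j$, and $Y_j$, and therefore $V_0$ onto $V_j$; conjugating an SDR $s_0$ by this diffeomorphism yields $s_j$. So it suffices to retract $\la{X}{Y}$ strongly onto $V_0=(X_0\vee J)\vee Y_0$.

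Next I would split $\la{X}{Y}$ along the cylinders $[0,\infty)\times\partial B'_X$ and $[0,\infty)\times\partial B'_Y$ into a \emph{bulk} and a \emph{ladder core}. The bulk is the pair of genuine products $[0,\infty)\times(X\setminus\Int{B'_X})$ and $[0,\infty)\times(Y\setminus\Int{B'_Y})$, which contain no surgery disks; on it I would use the straight-line homotopy in the $[0,\infty)$ factor, pushing each point down onto the bottom slice while preserving its $X$- or $Y$-coordinate. The ladder core is the union of the two surgery columns $[0,\infty)\times B'_X$ and $[0,\infty)\times B'_Y$, with the surgery balls $b_k$ deleted, together with all the rungs $\rho_k\approx D^1\times S^n$ glued in along the attaching spheres $\partial b_k$; I would retract it onto $J$ together with the two bottom disks $\{0\}\times B'_X$ and $\{0\}\times B'_Y$. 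Internally this core retraction is assembled from two standard pieces: on each rung the strong deformation retraction of $D^1\times S^n$ onto the connecting arc $D^1\times\{\ast\}$ union the core sphere $\{0\}\times S^n=S_k$ (which exists because the inclusion of this subcomplex is a homotopy equivalence of a CW pair), and on each column the retraction of the product-with-holes onto its bottom disk together with the vertical threads $\gamma$ joining that disk to each attaching sphere, the attaching spheres then being carried onto the cores $S_k$ by the rung retraction. The bottom disks, held fixed at height $0$, reassemble with the bottom slices of the bulk (pushed down to height $0$) to give $X_0$ and $Y_0$, while the threads and spheres assemble to give $J$, so the combined target is exactly $V_0$.

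The main obstacle is to make these local deformations agree on their overlaps, so that they patch into one continuous map that fixes $V_0$ pointwise throughout the homotopy. There are two kinds of overlap. Along the dividing cylinders $[0,\infty)\times\partial B'_X$ and $[0,\infty)\times\partial B'_Y$, where the bulk meets a column and no surgery occurs, both sides are the same coordinate-preserving pushdown onto $\{0\}\times\partial B'_X$ and $\{0\}\times\partial B'_Y$, so the retractions already agree there. Along the attaching spheres $\partial b_k$, where a column meets a rung, the rung retraction necessarily moves $\partial b_k$, so the column retraction must move the matching sphere by the identical isotopy; I would force this by fixing once and for all a collar of each attaching sphere and of each dividing cylinder and prescribing a common formula on those collars, after which the three families of local deformations patch by the pasting lemma. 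Granting this bookkeeping, continuity of the assembled homotopy on all of $\la{X}{Y}$ follows from continuity of the individual product and rung homotopies, each core sphere $S_k$ stays fixed at its own height while only the surrounding stringer material is pushed down, and every point of $V_0$ is held fixed by the local deformation containing it, so the assembled map is a strong deformation retraction. Essentially all of the labor lies in this patching, that is, in choosing the collars and common formulas that make the local deformations fit together; the individual retractions are routine.
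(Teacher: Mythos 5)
Your construction is essentially the paper's own proof: the paper likewise retracts the stringer/product regions down onto the bottom slices (producing the wedge of $X_j$, $Y_j$, and a string of spheres and arcs) and then retracts each rung onto its core sphere together with a connecting arc, yielding $V_j$. The only organizational difference is that the paper performs the two retractions sequentially---first collapsing the stringers while fixing the rungs pointwise, then collapsing the rungs---which avoids most of the collar-matching along the attaching spheres that your simultaneous patching requires; your version, with the compatibility along $\partial b_k$ arranged as you describe, is equally valid.
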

\begin{proof}
	We begin by retracting the stringer portions of $W_j$, while fixing the rungs pointwise.  Figure~\ref{fig:sdr} shows schematically how do this above $X_j$; the same argument applies above $Y_j$.
\begin{figure}[h!]
\centering
\subfigure[\emph{Region above $X_j - B_X$ retracts to $X_j - B_X$, while region above $B_X-B'_X$ retracts to a hyperboloid.}]
{
    \label{fig:sdr_1}
    \includegraphics[scale=1]{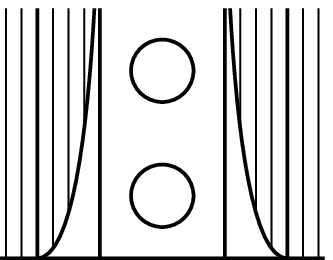}
}
\hspace{1cm}
\subfigure[\emph{Retraction of region under hyperboloid.}]
{
    \label{fig:sdr_2}
    \includegraphics[scale=1]{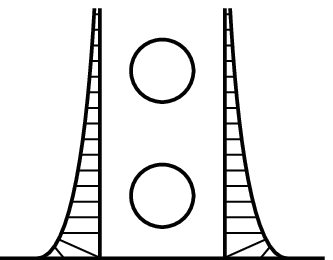}
}
\hspace{1cm}
\subfigure[\emph{Retraction of region above $B'_X$.}]
{
    \label{fig:sdr_3}
    \includegraphics[scale=1]{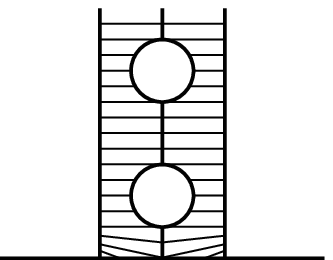}
}
\hspace{1cm}
\subfigure[\emph{Result of prior three retractions.}]
{
    \label{fig:sdr_4}
    \includegraphics[scale=1]{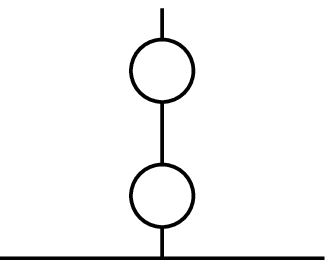}
}
\caption{Strong deformation retraction of portion of $W_j$ above $X_j$ to the iterated wedge of $X_j$ and an infinite string of $n$-spheres and intervals.  Rungs of $W_j$ are fixed pointwise at all times.}
\label{fig:sdr}
\end{figure}	
Next, simultaneously retract the remaining rung portions as shown in Figure~\ref{fig:rung_retract}.
\begin{figure}[htbp!]
    \centerline{\includegraphics[scale=1.0]{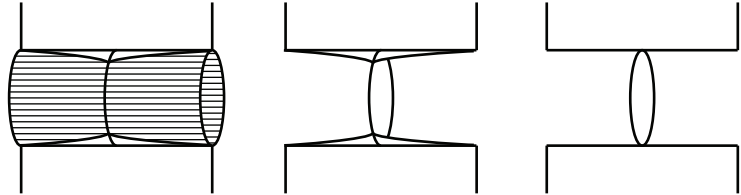}}
    \caption{Strong deformation retraction of a rung.}
\label{fig:rung_retract}
\end{figure}
This completes our description of $s_j$.
\end{proof}

We have the following diagram where the left maps are the obvious inclusions and projections.
\begin{equation}\label{eq:cd_0}\begin{split}
\xymatrix@R=12pt{
	X_j \ar@<.5ex>[dr] & &\\
	J_j \ar@<.5ex>[r] & \, V_j \ar@<.5ex>[r]^{\iota_j}  \ar@<.5ex>[ul] \ar@<.5ex>[l] \ar@<.5ex>[dl] & 
	W_j \ar@<.5ex>[l]^{s_j}\\
	Y_j \ar@<.5ex>[ur] & &
	  }
\end{split}
\end{equation}

\begin{corollary}\label{cor:W_j_cohomology}
	For each $j$, there is an isomorphism of graded $R$-algebras
\[
	\rc{\ast}{W_j}{R} \cong \rc{\ast}{X}{R} \oplus \rc{\ast}{J_j}{R} \oplus \rc{\ast}{Y}{R}.
\]
The cup product is coordinatewise in the direct sum.
\end{corollary}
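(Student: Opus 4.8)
The plan is to combine the strong deformation retraction of Lemma~\ref{lem:sdr} with the standard behaviour of reduced cohomology under wedges. Since $s_j\colon W_j\to V_j$ is a strong deformation retraction, the inclusion $\iota_j\colon V_j\to W_j$ is a homotopy equivalence with homotopy inverse $s_j$; as the cup product is natural, $\iota_j^*$ is an isomorphism of graded $R$-algebras
\[
	\iota_j^*\colon \rc{\ast}{W_j}{R}\xrightarrow{\ \cong\ }\rc{\ast}{V_j}{R}.
\]
It therefore suffices to compute $\rc{\ast}{V_j}{R}$ as an algebra, where $V_j=X_j\vee J_j\vee Y_j$. The wedge points are nondegenerate, since each summand is either a closed manifold or a CW complex, so the reduced-cohomology wedge formula applies.

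Next I would record the splitting at the level of graded $R$-modules using the maps already appearing in diagram~\eqref{eq:cd_0}. Write $a_X\colon X_j\hookrightarrow V_j$, $a_J\colon J_j\hookrightarrow V_j$, $a_Y\colon Y_j\hookrightarrow V_j$ for the inclusions and $c_X\colon V_j\to X_j$, $c_J$, $c_Y$ for the corresponding collapse maps, so that $c_A\circ a_A=\tn{id}$ and $c_A\circ a_B$ is constant for $A\neq B$. The collapse maps then induce an isomorphism of graded $R$-modules
\[
	c_X^*\oplus c_J^*\oplus c_Y^*\colon \rc{\ast}{X_j}{R}\oplus\rc{\ast}{J_j}{R}\oplus\rc{\ast}{Y_j}{R}\xrightarrow{\ \cong\ }\rc{\ast}{V_j}{R},
\]
with inverse the triple of restrictions $(a_X^*,a_J^*,a_Y^*)$. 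Finally I would identify $\rc{\ast}{X_j}{R}\cong\rc{\ast}{X}{R}$ and $\rc{\ast}{Y_j}{R}\cong\rc{\ast}{Y}{R}$, since $X_j=\cpa{j}\times X\approx X$ and likewise for $Y_j$, yielding the asserted module decomposition.

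It remains to verify that this isomorphism is multiplicative with coordinatewise product, which I expect to be the only step with any content. The diagonal terms are automatic: each $c_A^*$ is a ring homomorphism, so $c_A^*\alpha\cup c_A^*\alpha'=c_A^*\pa{\alpha\cup\alpha'}$. For the off-diagonal terms I would show $c_A^*\alpha\cup c_B^*\beta=0$ whenever $A\neq B$ by restricting to each wedge summand $C$: the pullback $a_C^*$ kills at least one of the two factors, because $c_A\circ a_C$ is constant whenever $C\neq A$ and a constant map induces the zero map on reduced cohomology. Thus $c_A^*\alpha\cup c_B^*\beta$ restricts to zero on every summand, and since $(a_X^*,a_J^*,a_Y^*)$ is injective, being inverse to the displayed isomorphism, the product itself vanishes. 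The main obstacle is precisely this vanishing of cross-terms: it is what promotes the module splitting of the wedge to a splitting of graded $R$-algebras and explains why the three summands do not interact under cup product even though they share the wedge point.
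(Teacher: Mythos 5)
Your argument is correct and follows essentially the same route as the paper, which simply invokes Lemma~\ref{lem:sdr}, diagram~\eqref{eq:cd_0}, and the standard computation of the cohomology algebra of a wedge sum (Hatcher, p.~215); you have merely written out the details of that wedge computation, including the vanishing of cross-terms via the collapse and inclusion maps. No gaps.
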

\begin{proof}
	This follows immediately from Lemma~\ref{lem:sdr}, diagram~\eqref{eq:cd_0}, and the computation of the cohomology algebra of a wedge sum \cite[p.~215]{hatcher}.
\end{proof}

Recall the direct system \eqref{eq:direct_system}.
\begin{lemma}\label{lem:surjective}
	Each $i^*_j$ is surjective.
\end{lemma}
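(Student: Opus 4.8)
The plan is to pass to reduced \emph{homology}, where the map induced by $i_j$ is geometrically transparent, and then to recover surjectivity on cohomology via the Universal Coefficient Theorem. First I would record the homology analogue of Corollary~\ref{cor:W_j_cohomology}: the strong deformation retraction of Lemma~\ref{lem:sdr} together with the computation of the homology of a wedge gives, for every $j$,
\[
	\rh{\ast}{W_j}{\Z} \cong \rh{\ast}{X}{\Z} \oplus \rh{\ast}{J_j}{\Z} \oplus \rh{\ast}{Y}{\Z},
\]
where $\rh{n}{J_j}{\Z}$ is free on the classes $[S_k]$ with $k\geq j$ and $\rh{1}{J_j}{\Z}$ is free on the classes $[T_k]$ with $k\geq j$, all other reduced homology of $J_j$ vanishing.

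Next I would compute the induced map $(i_j)_\ast$ on these generators. The rung classes are unaffected: $[S_k]\mapsto[S_k]$ and $[T_k]\mapsto[T_k]$ for every $k\geq j+1$, since these cycles lie in rungs common to $W_{j+1}$ and $W_j$. A stringer class of degree $<n$ is carried by the identity onto the corresponding summand: such a class is represented by a cycle that can be pushed off the surgery ball $B'_X$ (by the long exact sequence of the pair $\pa{X, X - \Int B'_X}$ the inclusion induces an isomorphism on $\rh{k}{\cdot}{\Z}$ for $k<n$) and then slid down from $X_{j+1}$ to $X_j$; likewise for $Y$. Finally, for the top stringer classes the orientation conventions of Definition~\ref{def:ladder_manifold} yield the cobordism relations $[X_{j+1}] = [X_j] - [S_j]$ and $[Y_{j+1}] = [Y_j] + [S_j]$ in $\rh{n}{W_j}{\Z}$, read off from the two cobordisms of Figure~\ref{fig:cobordism}.

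From this description $(i_j)_\ast$ is visibly injective, and its cokernel $C$ is generated by the two omitted rung classes $[S_j]$ and $[T_j]$; more precisely $C\cong\Z$ in degree $n$, $C\cong\Z$ in degree $1$, and $C=0$ otherwise. In particular $C$ is \emph{free}, since any torsion in $\rh{\ast}{X}{\Z}$ or $\rh{\ast}{Y}{\Z}$ lies in degrees $<n$ and maps isomorphically. Hence the short exact sequence $0\to\rh{\ast}{W_{j+1}}{\Z}\xrightarrow{(i_j)_\ast}\rh{\ast}{W_j}{\Z}\to C\to 0$ splits, realizing $\rh{\ast}{W_{j+1}}{\Z}$ as a direct summand of $\rh{\ast}{W_j}{\Z}$. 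Applying $\hom{\Z}{-}{R}$ and $\tn{Ext}^1_\Z(-,R)$ then yields split surjections, and the naturality of the Universal Coefficient Theorem together with a short diagram chase (the surjectivity form of the five lemma) shows that $i^*_j$ is surjective, as desired.

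I expect the main obstacle to be the geometric bookkeeping in the second step: verifying that the deformation retraction sends $X_{j+1}$ to $X_j$ up to a degree-one wrapping onto the rung sphere $S_j$ (equivalently, establishing the cobordism relations with the correct signs) and confirming that the lower-dimensional stringer classes genuinely avoid the surgery region. Once these geometric facts are in hand, injectivity and freeness of the cokernel are immediate, and the passage from homology to cohomology is a formal consequence of the Universal Coefficient Theorem.
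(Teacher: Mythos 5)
Your argument is correct, but it takes a genuinely different and much longer route than the paper. The paper's proof is essentially one line: there is an explicit retraction $r_j\colon W_j\to W_{j+1}$ that folds the slab of each stringer between heights $j$ and $j+1$ up onto the slab between heights $j+1$ and $j+2$ and carries the bottom rung of $W_j$ onto the bottom rung of $W_{j+1}$; since $r_j\circ i_j=\tn{id}$, one gets $i^*_j\circ r^*_j=\tn{id}$, so $i^*_j$ is surjective --- for any coefficient ring, with no homology computation and no appeal to the Universal Coefficient Theorem. You instead compute $(i_j)_*$ on reduced integral homology via the wedge decomposition coming from Lemma~\ref{lem:sdr}, observe that it is a split injection (the cokernel is free, generated by $[S_j]$ in degree $n$ and $[T_j]$ in degree $1$), and dualize. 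Your bookkeeping is right: the relations $[X_{j+1}]=[X_j]-[S_j]$ and $[Y_{j+1}]=[Y_j]+[S_j]$ are exactly the orientation conventions built into Definition~\ref{def:ladder_manifold}, and classes of degree $k<n$ do avoid the surgery ball since $H_k(X-\Int B'_X;\Z)\to H_k(X;\Z)$ is an isomorphism in that range; the passage to cohomology via naturality of the Universal Coefficient Theorem and the four lemma is then routine. What your extra work buys is most of the content of Proposition~\ref{prop:ladder_algebra}: your description of $(i_j)_*$ is precisely the map $\psi$ computed there, and the splitting gives the stronger conclusion that $i^*_j$ is a \emph{split} surjection with explicitly identified kernel. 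What the retraction argument buys is brevity and robustness: it is coefficient-free, purely functorial, and needs none of the orientation analysis.
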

\begin{proof}
For each $j\geq 0$, there is an obvious retraction $r_j \colon W_j \to W_{j+1}$.
It sends $X_{j+t}$ to $X_{j+2-t}$ and $Y_{j+t}$ to $Y_{j+2-t}$ for $t\in [0,1]$ and sends the bottom rung of $W_j$ to the bottom rung of $W_{j+1}$.
Thus, $r_j\circ i_j = \tn{id}$, and so $(r_j\circ i_j)^* = i^*_j \circ r^*_j = \tn{id}^*$.
As $\tn{id}^*$ is an isomorphism on $\rc{\ast}{W_{j+1}}{R}$, $i^*_j$ is surjective.
\end{proof}

For $j < k$, let $i^*_{k,j}$ be the composition $i^*_k \circ i^*_{k-1} \circ \dots \circ i^*_j$.
By Lemma~\ref{lem:surjective}, each element in the direct limit \eqref{eq:direct_limit_2} has a representative in $\rc{\ast}{W_0}{R}$.
Indeed, if $\alpha \in \rc{\ast}{W_j}{R}$ represents an element $\omega$ in the direct limit, then there exists some $\beta \in \rc{\ast}{W_0}{R}$ such that $i^*_{j-1,0}(\beta) = \alpha$, so $\beta$ also represents $\omega$.
Thus, we can write
\begin{equation}\label{eq:mod_twiddle}
	\rci{\ast}{\la{X}{Y}}{R} \cong \ilim_j \rc{\ast}{W_j}{R} \cong \rc{\ast}{W_0}{R}/ \sim
\end{equation}
where $\alpha \sim \beta$ if and only if there exists $j$ such that $i^*_{j,0}(\alpha) = i^*_{j,0}(\beta)$.

\begin{proposition}\label{prop:ladder_algebra}
The cohomology algebra at infinity of $\la{X}{Y}$ is
\[
	\rci{k}{\la{X}{Y}}{R} \cong
	\begin{cases}
		(\rc{n}{X}{R} \oplus R[[\sigma]] \oplus \rc{n}{Y}{R}) / K &\tn{if $k = n$,}\\
		\rc{k}{X}{R} \oplus 0 \oplus \rc{k}{Y}{R} &\tn{if $2\leq k \leq n-1$,}\\
		\rc{1}{X}{R} \oplus R[[\tau]]/R[\tau] \oplus \rc{1}{Y}{R} &\tn{if $k = 1$,}\\
		0 &\tn{otherwise,}
	\end{cases}
\]
where $K := \{(\sum \beta_i, \beta, -\sum \beta_i) \colon \beta = \sum \beta_i\sigma^i \in R[\sigma]\} \cong R[\sigma]$. The cup product is coordinatewise in the direct sum.  
\end{proposition}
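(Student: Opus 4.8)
The plan is to compute the direct limit \eqref{eq:mod_twiddle} of the system \eqref{eq:direct_system} by pinning down each map $i^*_j$ explicitly on the splitting of Corollary~\ref{cor:W_j_cohomology} and then passing to the colimit one degree at a time. Since every $i^*_j$ is surjective (Lemma~\ref{lem:surjective}), the limit is the quotient $\rc{\ast}{W_0}{R}/\!\sim$, so the entire computation reduces to identifying the maps and then identifying the classes equivalent to zero.

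First I would transport each $i^*_j$ onto the deformation retracts by setting $f_j := s_j \circ i_j \circ \iota_{j+1}\colon V_{j+1}\to V_j$, so that $f^*_j = \iota^*_{j+1}\circ i^*_j\circ s^*_j$ is conjugate to $i^*_j$ under the isomorphisms of Lemma~\ref{lem:sdr} and diagram~\eqref{eq:cd_0}. Up to homotopy $f_j$ is a map of wedges, and I would identify it factorwise: on $J_{j+1}$ it is the inclusion of the sub-ladder of height $\ge j+1$, carrying $S_k\mapsto S_k$ and $T_k\mapsto T_k$ for $k\ge j+1$ and omitting the bottom cells $S_j,T_j$; on $X_{j+1}$ (resp.\ $Y_{j+1}$) it is a pinch map $X\to X\vee S_j$ whose sphere-coordinate sign is dictated by the cobordism orientations $X_{j+1}-X_j+S_j$ and $Y_{j+1}-Y_j-S_j$ of Definition~\ref{def:ladder_manifold}. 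The key feature is that a pinch $X\to X\vee S^n$ induces an isomorphism on $\rc{k}{X}{R}$ for every $k\ne n$ (the sphere carries no reduced cohomology there, as $n\ge 2$) and, in degree $n$, sends the class dual to $S_j$ to $\mp$ the orientation class. Dualizing, $i^*_j$ fixes the $\rc{\ast}{X}{R}$ and $\rc{\ast}{Y}{R}$ summands in all degrees $k\ne n$, discards the bottom coordinates $\sigma^j,\tau^j$ of the $J$-part, preserves the higher coordinates, and in degree $n$ sends $\sigma^j\mapsto -\xi+\eta$, where $\xi,\eta$ denote the orientation generators of $\rc{n}{X}{R}\cong R$ and $\rc{n}{Y}{R}\cong R$.

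With the maps in hand the colimits are routine degreewise algebra. For $2\le k\le n-1$ the $J$-part vanishes and $i^*_j$ is an isomorphism on the two manifold summands, giving $\rc{k}{X}{R}\oplus\rc{k}{Y}{R}$. For $k=1$ the manifold summands are again preserved while the $J$-part is the system $R[[\tau]]\to R[[\tau]]\to\cdots$ in which each map discards the lowest coefficient; its colimit is $R[[\tau]]/R[\tau]$, since a power series becomes zero after finitely many steps precisely when it is a polynomial. For $k=n$, iterating gives $i^*_{j,0}(c,(a_0,a_1,\dots),d)=(c-\sum_{i<j}a_i,\,(a_j,a_{j+1},\dots),\,d+\sum_{i<j}a_i)$, which vanishes for some $j$ exactly when the $\sigma$-coordinate has finite support, $c=\sum_i a_i$, and $d=-\sum_i a_i$; this is precisely membership in $K$, so the degree-$n$ limit is $(\rc{n}{X}{R}\oplus R[[\sigma]]\oplus\rc{n}{Y}{R})/K$. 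All remaining degrees give $0$ since $W_j$ is connected and homotopy equivalent to the $n$-complex $V_j$.

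Finally, the cup product is coordinatewise at every finite stage by Corollary~\ref{cor:W_j_cohomology}, and all products in the $J$-factor vanish, so the only nonzero products are those internal to $\rc{\ast}{X}{R}$ and $\rc{\ast}{Y}{R}$; since these land in degree $n$ inside the $X$- and $Y$-summands and $K\cap(\rc{n}{X}{R}\oplus 0\oplus\rc{n}{Y}{R})=0$, the product descends unambiguously to the quotient and stays coordinatewise. I expect the main obstacle to be the second step: rigorously justifying the pinch-map description of $f_j$, and in particular extracting the correct sign on the $S_j$-coordinate from the $0$-surgery together with the orientation conventions of Definition~\ref{def:ladder_manifold}. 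Once that geometric input is secured, the remaining direct-limit computations—most satisfyingly the emergence of $K$ in degree $n$—are purely formal.
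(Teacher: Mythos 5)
Your proposal is correct and follows essentially the same route as the paper: both realize the limit as $\rc{\ast}{W_0}{R}/\!\sim$ using surjectivity of the bonding maps, compute those maps on the wedge decomposition $V_j = X_j \vee J_j \vee Y_j$ (your $f_j$ is the paper's $d_j$), extract the sign on the $S_j$-coordinate from the cobordism orientations $X_{j+1}-X_j+S_j$ and $Y_{j+1}-Y_j-S_j$, and identify the eventually-zero classes as $0\oplus R[\tau]\oplus 0$ in degree $1$ and $K$ in degree $n$. The only cosmetic differences are that the paper handles degrees $2\le k\le n-1$ via the long exact sequence of the pair $(W_j,W_{j+1})$ rather than reading them off the factorwise description, and it justifies passing from the integral homology map $\psi$ to $i^*_j$ by checking that the relevant $\tn{Ext}$ groups vanish before invoking the Universal Coefficient Theorem --- a verification your ``dualizing'' step leaves implicit but which is needed (or must be replaced by a direct cohomological computation of the pinch map) over a general coefficient ring $R$.
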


\begin{proof}
By the preceding discussion, it remains to describe $\sim$.
As $$\rc{\ast}{W_j,W_{j+1}}{R} \cong H^{\ast}(W_j/W_{j+1};R),$$ the long exact sequence for the pair $(W_j,W_{j+1})$ along with Lemma~\ref{lem:surjective} implies that $i^*_j \colon \rc{k}{W_j}{R} \to \rc{k}{W_{j+1}}{R}$ is an isomorphism for $2\leq k \leq n-1$.
Thus, $\rci{k}{\la{X}{Y}}{R} \cong \rc{k}{W_0}{R}$ for $2\leq k \leq n-1$.

For each $j$, we have the following commutative diagram of spaces.
\begin{equation}\label{eq:cd}\begin{split}
\xymatrix@C=6pt{
	  & W_{j+1} \ar@{->}[rrrr]^{i_j} & & & & W_j \ar@{->}[d]^{s_j} &\\
	  & V_{j+1}	\ar@{->}[u]^{\iota_{j+1}} \ar@{->}[rrrr]^{d_j} & & & & V_j \ar@{->}[dl]^{} \ar@{->}[d]^{} \ar@{->}[dr]^{} &\\
	X_{j+1} \ar@{->}[ur]^{} & J_{j+1}\ar@{->}[u]^{} & Y_{j+1}\ar@{->}[ul]^{} & & X_j & J_j & Y_j
}
\end{split}
\end{equation}
Here, $s_j$ is the retraction from Lemma~\ref{lem:sdr}, $\iota_{j+1}$ is inclusion, and  $d_j := s_j\circ i_j \circ \iota_{j+1}$.  The bottom maps are again the obvious inclusions and projections.

There are nine compositions in \eqref{eq:cd} that begin and end on the bottom row.  The geometry of these compositions is straightforward given our definition of $s_j$.  Recalling our orientation conventions, these compositions induce a homomorphism on integer homology $$\psi\colon \wt{H}_{\ast}(X) \oplus \wt{H}_{\ast}(J_{j+1}) \oplus \wt{H}_{\ast}(Y) \to \wt{H}_{\ast}(X) \oplus \wt{H}_{\ast}(J_j) \oplus \wt{H}_{\ast}(Y).$$
In dimensions other than $n$, $\psi = \tn{id} \oplus \tn{inclusion} \oplus \tn{id}$.
In dimension $n$, $\psi$ is given by $(a,b,c) \mapsto (a, (c-a)s^j+b,c)$ for $b\in s^{j+1}\Z[s]$.

As $X$ and $Y$ are closed and oriented $n$-manifolds,
\[
	\rh{n-1}{W_j}{\Z} \cong 
	\begin{cases}
		\rh{n-1}{X}{\Z}\oplus 0 \oplus \rh{n-1}{Y}{\Z} &\tn{if $n>2$}\\
		\rh{n-1}{X}{\Z}\oplus t^j\Z[t] \oplus \rh{n-1}{Y}{\Z} &\tn{if $n=2$}
	\end{cases}
\]
is free abelian. Thus, $\tn{Ext}(\rh{m-1}{W_j}{\Z},R) = 0$ for $m=1$ and $m=n-1$.
So, the Universal Coefficient Theorem implies that $i^*_j$ is the dual homomorphism of $\psi$ in these dimensions.  Therefore, $i^*_j$ is clear in dimension 1.  In dimension $n$, $i^*_j$ sends $(\alpha,\beta,\gamma)$ to $(\alpha-\beta_j,\beta-\beta_j\sigma^j,\gamma+\beta_j)$ where $\beta\in \sigma^jR[[\sigma]]$.
To determine $\sim$ in the remaining dimensions 1 and $n$, it suffices to describe the subgroups of $\rc{1}{W_0}{R}$ and $\rc{n}{W_0}{R}$ consisting of elements that are sent to 0 by some $i^*_{j,0}$.  By our description of $i^*_j$ as the dual of $\psi$, these subgroups are exactly $0\oplus R[\tau] \oplus 0$ in dimension 1 and $K$ in dimension $n$.

The cup product structure of the algebra $\rci{\ast}{\la{X}{Y}}{R}$ can be summarized as $[\alpha]\cup[\beta] = [\alpha\cup\beta]$ for $\alpha, \beta \in \rc{\ast}{W_0}{R}$.  This is a direct consequence of the definition of a direct limit of algebras and the fact that every element of $\rci{\ast}{\la{X}{Y}}{R}$ has a representative in $\rc{\ast}{W_0}{R}$.  Combining these facts yields the claim about cup products in the statement of this proposition.
\end{proof}

\begin{remark}\label{rmk:injection}
	Observe that the canonical map
	\[
		\pi\colon \rc{n}{X}{R} \oplus 0 \oplus \rc{n}{Y}{R} \to (\rc{n}{X}{R} \oplus R[[\sigma]] \oplus \rc{n}{Y}{R}) / K
	\]
	is injective.  In particular, if $R$ is a field, then the image of $\pi$ is a 2-dimensional $R$-vector subspace of an uncountably-infinite dimensional $R$-vector space.
\end{remark}

\section{Stringer Sum}
\label{sec:stringer_sum}
We now define stringer sum, an operation on a ladder manifold and a disjoint stringer that yields a new ladder manifold.
Recall that a manifold $A$ is \emph{neatly} embedded in a manifold $B$ if $A \cap \partial B = \partial A$ and this intersection is transverse.
A \emph{straight} ray in $[0,\infty) \times A$ has the form $[0,\infty)\times \{a\}$.

\begin{definition}[Stringer Sum]\label{def:stringer_sum}
	Fix a ladder manifold $\la{X}{Y}$ and a disjoint stringer $[0,\infty) \times Z$ of the same dimension $n+1 \geq 3$. 
Fix neatly embedded, straight rays $r \subset [0,\infty)\times Z$ and $r' \subset \la{X}{Y}$, where $r'$ lies in one of the stringers of $\la{X}{Y}$ and avoids $B_X$ and $B_Y$ (see Figure~\ref{fig:stringer_sum}).
	Define the \emph{stringer sum} of $(\la{X}{Y},r')$ and $([0,\infty) \times Z, r)$, denoted $$(\la{X}{Y},r') \cdot ([0,\infty) \times Z,r),$$ as follows.
	Let $\nu r \subset [0,\infty)\times Z$ and $\nu r' \subset \la{X}{Y}$ be normal, closed tubular neighborhoods of $r$ and $r'$ respectively.
	Identify $([0,\infty) \times Z) - \Int \nu r$ and $\la{X}{Y} - \Int \nu r'$ along $\partial \nu r$ and $\partial \nu r'$ via an orientation reversing, fiber respecting diffeomorphism so that connected sum is achieved at each height.
\end{definition}

\begin{figure}[htbp!]
    \centerline{\includegraphics[scale=1.0]{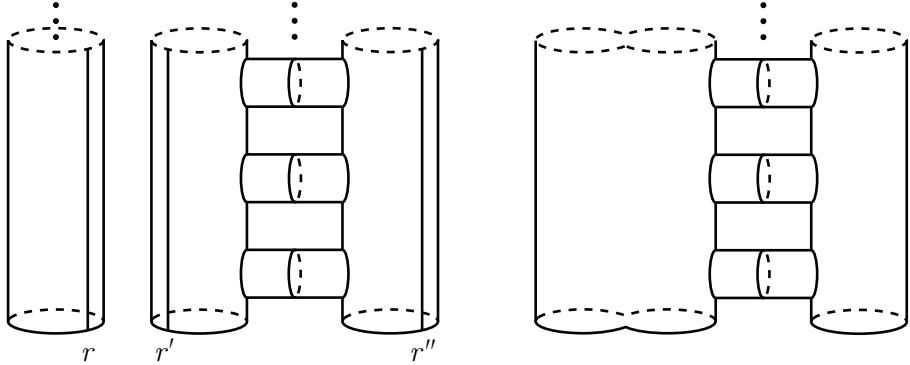}}
    \caption{Stringer $[0,\infty) \times Z$ with ray $r$ and ladder manifold $\la{X}{Y}$ with rays $r'$ and $r''$, one in each stringer (left). Stringer sum $(\la{X}{Y},r') \cdot ([0,\infty) \times Z,r)$ (right).}
\label{fig:stringer_sum}
\end{figure}

With notation as in Figure~\ref{fig:stringer_sum}, observe that 
\[
	(\la{X}{Y},r') \cdot ([0,\infty) \times Z,r) \approx \la{X\cs Z}{Y}
\]
and
\[
	(\la{X}{Y},r'') \cdot ([0,\infty) \times Z,r) \approx \la{X}{Y\cs Z}
\]
where $\cs$ denotes oriented connected sum CS.


\section{Examples: Lens Spaces}
\label{sec:lens}
For each positive integer $k$, let $L_k$ denote the 3-dimensional lens space $L(k,1)$, which is obtained by performing $-k$-surgery on the unknot in $S^3$~\cite[p.~158]{gs}.
Recall that
\[
	\rh{i}{L_k}{\Z} \cong
	\begin{cases}
		\Z &\tn{if $i=3$,}\\
		\Z_k &\tn{if $i=1$,}\\
		0 &\tn{otherwise,}
	\end{cases}
\]
\[
	\rc{i}{L_k}{\Z} \cong
	\begin{cases}
		\Z &\tn{if $i=3$,}\\
		\Z_k &\tn{if $i=2$,}\\
		0 &\tn{otherwise,}
	\end{cases}
\]
and $\rc{i}{L_k}{\Z_k} \cong \Z_k$ for $i = 1,2,3$.

For computability, we now restrict to field coefficients.
Fix a prime $p>0$.
By Poincar\'{e} duality, if $M$ is a closed $m$-manifold and $0 \neq \alpha \in \rc{i}{M}{\Z_p}$ where $i<m$, then there exists $\beta\in \rc{m-i}{M}{\Z_p}$ such that $\alpha \cup \beta \in \rc{m}{M}{\Z_p}$ is a generator \cite[p.~250]{hatcher}.

If $M$ is an $(n+1)$-manifold, then define 
\[
\Gamma_p\pa{M} := \langle \alpha \cup \beta \mid \deg \alpha, \deg \beta < n \rangle \leq \rci{n}{M}{\Z_p}
\]
to be the vector subspace of $\rci{n}{M}{\Z_p}$ generated by products of classes of degree less than $n$ in $\rci{\ast}{M}{\Z_p}$.
The dimension of $\Gamma_p$ as a $\Z_p$-vector space, denoted $\dim_{\Z_p}\Gamma_p$, is a graded algebra invariant since an isomorphism of graded algebras respects products and gradings.

By Proposition~\ref{prop:ladder_algebra} and Remark~\ref{rmk:injection}, we have
\[
	\dim_{\Z_p}\Gamma_p\pa{\la{X}{Y}} \leq 2
\]
for any ladder manifold.

\begin{proposition}\label{prop:lens_ladder}
The stringer sums
\[
	\pa{\la{L_p}{S^3},r'} \cdot ([0,\infty) \times L_p,r) \approx \la{L_p \cs L_p}{S^3}
\] 
and
\[
	\pa{\la{L_p}{S^3},r''} \cdot ([0,\infty) \times L_p,r) \approx \la{L_p}{L_p}
\]
have nonisomorphic $\Z_p$-cohomology algebras at infinity.
\end{proposition}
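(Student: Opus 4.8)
The plan is to distinguish the two stringer sums by evaluating the graded-algebra invariant $\dim_{\Z_p}\Gamma_p$ on each. Both right-hand sides $\la{L_p \cs L_p}{S^3}$ and $\la{L_p}{L_p}$ are ladder manifolds, so I can read off their $\Z_p$-cohomology algebras at infinity from Proposition~\ref{prop:ladder_algebra}. The goal is to show
\[
	\dim_{\Z_p}\Gamma_p\pa{\la{L_p \cs L_p}{S^3}} = 1
	\quad\text{and}\quad
	\dim_{\Z_p}\Gamma_p\pa{\la{L_p}{L_p}} = 2.
\]
Since an isomorphism of graded algebras respects products and gradings, $\dim_{\Z_p}\Gamma_p$ is an isomorphism invariant, so these distinct values force the two algebras to be nonisomorphic.

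First I would establish a general additivity for closed oriented $3$-manifolds $X$ and $Y$, namely $\dim_{\Z_p}\Gamma_p\pa{\la{X}{Y}} = \dim_{\Z_p}\Gamma_p\pa{X} + \dim_{\Z_p}\Gamma_p\pa{Y}$, where $\Gamma_p\pa{X} \leq \rc{3}{X}{\Z_p}$ is the subspace spanned by cup products of classes of positive degree less than $3$. Here $n=3$, so the only products landing in $\rci{3}{\la{X}{Y}}{\Z_p}$ with both factors of degree $<3$ are products of a degree-$1$ and a degree-$2$ class. By Proposition~\ref{prop:ladder_algebra} the degree-$2$ part of the algebra at infinity is $\rc{2}{X}{\Z_p} \oplus 0 \oplus \rc{2}{Y}{\Z_p}$, with no middle ($J$) contribution; moreover cup products are coordinatewise and all products in the $J$-factor vanish, since every cup product in $\rc{\ast}{J}{\Z_p}$ vanishes. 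Hence a product of a degree-$1$ class $(\alpha_X,\alpha_J,\alpha_Y)$ and a degree-$2$ class $(\beta_X,0,\beta_Y)$ equals $(\alpha_X\cup\beta_X,\,0,\,\alpha_Y\cup\beta_Y)$, so the span of all such products is exactly the image of $\Gamma_p\pa{X}\oplus 0 \oplus \Gamma_p\pa{Y}$. By Remark~\ref{rmk:injection} the canonical map $\pi$ is injective, so these $X$- and $Y$-contributions stay linearly independent in $\rci{3}{\la{X}{Y}}{\Z_p}$, giving the additivity.

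It then remains to compute $\Gamma_p$ for the three closed $3$-manifolds involved. For $S^3$ we have $\rc{1}{S^3}{\Z_p} = \rc{2}{S^3}{\Z_p} = 0$, so $\Gamma_p\pa{S^3}=0$. For $L_p = L(p,1)$ the Poincar\'e-duality statement recalled just before the proposition supplies $\alpha \in \rc{1}{L_p}{\Z_p}$ and $\beta \in \rc{2}{L_p}{\Z_p}$ with $\alpha\cup\beta$ a generator of $\rc{3}{L_p}{\Z_p}\cong\Z_p$, so $\dim_{\Z_p}\Gamma_p\pa{L_p}=1$. The crux is $L_p \cs L_p$: although $\rc{1}{L_p\cs L_p}{\Z_p}$ and $\rc{2}{L_p\cs L_p}{\Z_p}$ are each $2$-dimensional, I would show $\dim_{\Z_p}\Gamma_p\pa{L_p\cs L_p}=1$. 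The point is that in a connected sum of closed oriented $n$-manifolds the pairing $H^i \times H^{n-i}\to H^n$ is block-diagonal: cross products between classes supported in different summands vanish (their supports separate across the connecting sphere), while each diagonal product hits the single common top generator. Thus all products span only a $1$-dimensional subspace of $\rc{3}{L_p\cs L_p}{\Z_p}\cong\Z_p$.

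Combining these computations with the additivity gives $\dim_{\Z_p}\Gamma_p\pa{\la{L_p\cs L_p}{S^3}} = 1+0 = 1$ and $\dim_{\Z_p}\Gamma_p\pa{\la{L_p}{L_p}} = 1+1 = 2$, completing the proof. The main obstacle is the connected-sum cup-product computation: one must check carefully that the two copies of $L_p$ inside $L_p\cs L_p$ feed their products into the \emph{same} top class, so $\Gamma_p$ collapses to dimension $1$, whereas the two copies inside $\la{L_p}{L_p}$ occupy distinct stringers and, via injectivity of $\pi$, contribute \emph{independently} to degree $n$, giving dimension $2$. This contrast is exactly the effect of the ray choice.
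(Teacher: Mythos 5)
Your proposal is correct and follows exactly the paper's route: the paper's proof likewise invokes Proposition~\ref{prop:ladder_algebra} and distinguishes the two ladder manifolds by computing $\dim_{\Z_p}\Gamma_p = 1$ versus $2$, leaving the details you supply (additivity over the stringers via the coordinatewise cup product and Remark~\ref{rmk:injection}, plus $\Gamma_p(S^3)=0$, $\dim\Gamma_p(L_p)=1$, and $\dim\Gamma_p(L_p\cs L_p)=1$) as an implicit "notice that." Your fleshed-out version of those computations is accurate.
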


\begin{proof}
Consider the algebras $\rci{\ast}{\la{L_p \cs L_p}{S^3}}{\Z_p}$ and $\rci{\ast}{\la{L_p}{L_p}}{\Z_p}$, computed as in Proposition~\ref{prop:ladder_algebra}.  Notice that $$\dim_{\Z_p} \Gamma_p\pa{\la{L_p \cs L_p}{S^3}} = 1$$ and
\[
	\dim_{\Z_p} \Gamma_p\pa{\la{L_p}{L_p}} = 2. \qedhere
\]
\end{proof}

\section{Proof of the Main Theorem}
\label{sec:proof}

For each integer $k\geq 1$, let $E_k$ be the $D^2$ bundle over $S^2$ with Euler number $-k$, which is a $0$-handle union a $2$-handle attached along a $-k$ framed unknot \cite[pp.~119--120]{gs}.
Note that $\partial E_k = L_k$.
Define  $$Y_k := \la{L_k}{S^3}\cup_\partial E_k \cup_\partial D^4$$ and $$Z_k := ([0,\infty) \times L_k) \cup_\partial E_k \approx \Int E_k.$$  Both $Y_k$ and $Z_k$ are smooth, open, one-ended $4$-manifolds.  We refer to $Y_k$ as a \emph{capped ladder} and $Z_k$ as a \emph{capped stringer} (see Figure~\ref{fig:csi_capped}).
\begin{figure}[htbp!]
    \centerline{\includegraphics[scale=1.0]{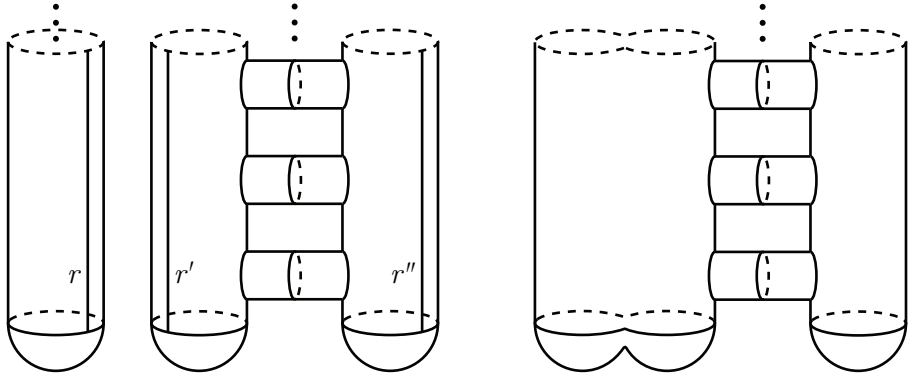}}
    \caption{Capped stringer $Z_k$, capped ladder $Y_k$, and result of CSI operation $(Y_k,r')\csi (Z_k,r)$.}
\label{fig:csi_capped}
\end{figure}
Let $r',r'' \subset \la{L_k}{S^3} \subset Y_k$ and $r\subset [0,\infty) \times L_k \subset Z_k$ be rays as in Proposition~\ref{prop:lens_ladder} and Figure~\ref{fig:csi_capped}.

\begin{theorem}\label{thm:main_examples}
	Let $p>0$ be prime. The manifolds $$M_1 := (Y_p,r') \csi (Z_p,r)$$ and $$M_2 := (Y_p,r'') \csi (Z_p,r)$$ are not proper homotopy equivalent.
\end{theorem}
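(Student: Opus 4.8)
The plan is to distinguish $M_1$ and $M_2$ by their $\Z_p$-cohomology algebras at infinity, then invoke the proper homotopy invariance of this algebra \cite[Ch.~3]{hr}.

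First I would exploit the fact that the cohomology algebra at infinity depends only on the end of a manifold: if two open manifolds have diffeomorphic complements of suitable compact sets, their algebras at infinity agree, since the defining direct limits run over cofinal systems that coincide. This lets me discard any compact portion of $M_1$ or $M_2$ and, in particular, the compact caps appearing in the definitions of $Y_p$ and $Z_p$.

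Next I would identify the end of each $M_i$ with that of a stringer sum. Since $Y_p = \la{L_p}{S^3}\cup_\partial E_p\cup_\partial D^4$ and $Z_p = ([0,\infty)\times L_p)\cup_\partial E_p$, with the caps $E_p$ and $D^4$ compact and attached at height $0$, the sets $K_N$ consisting of all caps together with the height-$\leq N$ portion form a compact exhaustion of each $M_i$. By Remark~(2) following Definition~\ref{def:CSI}, the CSI operation is independent of the gluing diffeomorphism, so I may realize $M_1$ using the height-respecting gluing of Definition~\ref{def:stringer_sum}. Because $r'$ lies in the $L_p$-stringer of $Y_p$ and $r$ in the $L_p$-stringer of $Z_p$, the height-$\geq N$ portion of $M_1$ is then exactly $W_N$ of the stringer sum $(\la{L_p}{S^3},r')\cdot([0,\infty)\times L_p,r)\approx\la{L_p\cs L_p}{S^3}$; passing to the limit yields $\rci{\ast}{M_1}{\Z_p}\cong\rci{\ast}{\la{L_p\cs L_p}{S^3}}{\Z_p}$. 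Since $r''$ instead lies in the $S^3$-stringer of $Y_p$, the same argument (using $S^3\cs L_p\approx L_p$) gives $\rci{\ast}{M_2}{\Z_p}\cong\rci{\ast}{\la{L_p}{L_p}}{\Z_p}$.

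Finally I would apply Proposition~\ref{prop:lens_ladder}, which shows that these two ladder algebras are non-isomorphic via the invariant $\dim_{\Z_p}\Gamma_p$ (equal to $1$ and $2$, respectively). Combined with the identifications above, this gives $\rci{\ast}{M_1}{\Z_p}\not\cong\rci{\ast}{M_2}{\Z_p}$ as graded $\Z_p$-algebras, so $M_1$ and $M_2$ cannot be proper homotopy equivalent. The hard part will be the identification in the previous paragraph: one must check carefully that removing the compact caps and adopting the height-respecting gluing genuinely exhibits the high-height part of each $M_i$ as $W_N$ of the appropriate stringer sum. This geometric bookkeeping is the crux; everything else is a direct appeal to Proposition~\ref{prop:lens_ladder} and to proper homotopy invariance.
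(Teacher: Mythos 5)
Your proposal is correct and follows essentially the same route as the paper: identify each $M_i$, away from a compact set, with the appropriate stringer sum ($\la{L_p\cs L_p}{S^3}$ for $r'$ and $\la{L_p}{L_p}$ for $r''$), note that the compact caps do not affect the cohomology algebra at infinity, and conclude via Proposition~\ref{prop:lens_ladder} and proper homotopy invariance. The paper phrases the first step as a global diffeomorphism $M_1 \approx \la{L_p\cs L_p}{S^3} \cup_\partial (E_p \csb E_p) \cup_\partial D^4$ and $M_2 \approx \la{L_p}{L_p} \cup_\partial E_p \cup_\partial E_p$ rather than your cofinal-exhaustion argument, but the substance is the same.
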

\begin{proof}
	First, note that
	\[
		M_1 \approx \la{L_p\cs L_p}{S^3} \cup_\partial (E_p \csb E_p) \cup_\partial D^4
	\]
	and
	\[
		M_2 \approx \la{L_p}{L_p} \cup_\partial E_p \cup_\partial E_p,
	\]
	where $\csb$ denotes oriented connected sum boundary CSB.
	Thus,
	\[
		\rci{\ast}{M_1}{\Z_p} \cong \rci{\ast}{\la{L_p\cs L_p}{S^3}}{\Z_p}
	\]
	and
	\[
		\rci{\ast}{M_2}{\Z_p} \cong \rci{\ast}{\la{L_p}{L_p}}{\Z_p}.
	\]
	By Proposition~\ref{prop:lens_ladder}, $\rci{\ast}{M_1}{\Z_p}$ and $\rci{\ast}{M_2}{\Z_p}$ are not isomorphic.  Therefore, $M_1$ and $M_2$ are not proper homotopy equivalent.
\end{proof}

\begin{remark}\label{rmk:distinct}
Observe that $Y_j \not\approx Y_k$ and $Z_j \not\approx Z_k$ for positive integers $j\neq k$, and $Y_j \not\approx Z_k$ for any positive integers $j$ and $k$.  These observations hold by the following facts: (i) $\rci{2}{Y_j}{\Z} \cong \Z_j$, (ii) $\rci{2}{Z_j}{\Z} \cong \Z_j$, (iii) $\rci{1}{Y_j}{\Z} \cong \Z[[\tau]]/\Z[\tau]$, and (iv) $\rci{1}{Z_k}{\Z} = 0$.
Facts (i) and (iii) follow from Proposition~\ref{prop:ladder_algebra}, while (ii) and (iv) follow from the basic property that $\rci{\ast}{[0,\infty) \times X}{R} \cong \rc{\ast}{X}{R}$ for each closed manifold $X$.
Hence, Theorem~\ref{thm:main_examples} implies the Main Theorem as there are infinitely many primes.
\end{remark}

\section{Generalizations of the Main Examples}
\label{sec:generalization}

Our main examples from Section~\ref{sec:proof} are readily modified to produce more $4$-dimensional examples as well as others of all dimensions at least 3.  Define $$T^k := \underbrace{S^1 \times \dots \times S^1}_{k}.$$

\begin{enumerate}[label=(\arabic*),leftmargin=*]\setcounter{enumi}{0}
\item In $Y_p$, $S^3$ can be replaced with any $\Z_p$-homology $3$-sphere $\Sigma^3$ and $D^4$ with any smooth null-cobordism of $\Sigma^3$.

\item For any $j\geq 1$, $Z_p$ can be replaced with $Z_{jp}$, since $\rc{i}{L_{jp}}{\Z_p} \cong \Z_{p}$ for $i=1,2,3$.

\item To obtain examples in all dimensions $n+1\geq 4$, replace $L_p$ with $L_p \times T^{n-3}$ in both $Y_p$ and $Z_p$, and replace $S^3$ with $S^n$.  Cap with $E_p \times T^{n-3}$ and $D^{n+1}$.  Crossing with $S^1$ does not affect $\dim_{\Z_p}\Gamma_p$.  We obtain infinitely many examples this way by the following observations derived from Remark~\ref{rmk:distinct} and the K\"{u}nneth formula
\begin{align*}
	\tn{torsion}\, \rci{2}{Y_p}{\Z} &\cong \Z_p\\
	\tn{torsion}\, \rci{2}{Z_p}{\Z} &\cong \Z_p\\
	\rci{1}{Y_p}{\Z} &\cong \Z^{n-3} \oplus \Z[[\tau]]/\Z[\tau]\\
	\rci{1}{Z_p}{\Z} &\cong \Z^{n-3}
\end{align*}

\item Let $\Sigma_g$ be the closed surface of genus $g$. Let $H_g$ be the $3$-dimensional handlebody with $\partial H_g = \Sigma_g$.  Define $$Y_g := \la{\Sigma_g}{S^2} \cup_\partial H_g \cup_\partial D^3$$ and $$Z_g := ([0,\infty)\times \Sigma_g) \cup_\partial H_g.$$
Let $r',r'' \subset Y_g$ and $r\subset Z_g$ be straight rays as in Figure~\ref{fig:csi_capped}.
For $g,h\in \Z^+$, define
\[
	M_1(g,h) := (Y_g,r') \csi (Z_h,r)
\]
and
\[
	M_2(g,h) := (Y_g,r'') \csi (Z_h,r).
\]
Fix any prime $p>0$.  By Proposition~\ref{prop:ladder_algebra}, $$\dim_{\Z_p} \Gamma_p(M_1(g,h)) = 1$$ and $$\dim_{\Z_p} \Gamma_p(M_2(g,h)) = 2.$$
Thus, $M_1(g,h)$ and $M_2(g,h)$ are not proper homotopy equivalent.

We can prove the Main Theorem for $3$-manifolds using the collection of pairs $Y_1$ and $Z_g$ for $g\in \Z^+$.  These manifolds are distinguished by the following facts 
\begin{align*}
	\rci{1}{Y_1}{\Z} &\cong \Z^2 \oplus \Z[[\tau]]/\Z[\tau]\\
	\rci{1}{Z_g}{\Z} &\cong \Z^{2g}
\end{align*}
Interestingly, we cannot distinguish $Y_g$ and $Y_{g'}$ for any $g\neq g'$.

We obtain infinitely many new examples in all dimensions $n+1\geq 4$ by considering ladders and stringers based on $\Sigma_g \times T^{n-2}$ and $S^n$.  In these dimensions, one can distinguish all of the summands.  Details are left to the interested reader.

\item\label{itm:two_ladders} Let $p,q>0$ be distinct primes.  Define
\[
	M(p,q) := \la{L_p}{L_q} \cup_\partial E_p \cup_\partial E_q.
\]
Fix neat, straight rays $r',r'' \subset \la{L_p}{L_q}$, one in each stringer.  The same technique used to prove Proposition~\ref{prop:ladder_algebra} yields the following table where $M := M(p,q)$.
\begin{table}[h!]\renewcommand{\arraystretch}{1.2}
\begin{center}
\begin{tabular}{|c|c|c|}\hline
	& $\dim_{\Z_p} \Gamma_p$ & $\dim_{\Z_q} \Gamma_q$\\ \hline
	$(M,r') \csi (M,r')$ & 1 & 2 \\ \hline
	$(M,r'') \csi (M,r'')$ & 2 & 1\\ \hline
	$(M,r') \csi (M,r'')$ & 2 & 2\\ \hline
\end{tabular}
\end{center}
\end{table}
Thus, the CSI of $M$ with itself yields at least 3 distinct manifolds up to proper homotopy.  We obtain infinitely many $M$ with this property since 
\[
	\rci{2}{M(p,q)}{\Z} \cong \Z_p \oplus \Z_q.
\]

\item\label{itm:many_ladders} Given $3$-dimensional lens spaces $L_{p_1}, L_{p_2}, \dots, L_{p_{m+1}}$, we define the \emph{generalized capped ladder manifold} $M\pa{p_1,p_2,\dots,p_{m+1}}$ inductively as follows. 
\begin{multline*}
	M\pa{p_1,p_2,\dots,p_{m+1}} := \\ \pa{M\pa{p_1,p_2,\dots,p_m},r} \csi \pa{\la{S^3}{L_{p_{m+1}}} \cup_\partial (D^4 \sqcup E_{p_{m+1}}),r'}
\end{multline*}
where $r$ and $r'$ are neat, straight rays in the $L_{p_m}$ and $S^3$ stringers respectively.
Now, fix $p_1, p_2, \ldots, p_m > 0$ to be distinct primes.  A similar calculation to the one in the previous item shows that the CSI of $M\pa{p_1,p_2,\dots,p_m}$ with itself yields at least $m+1$ distinct manifolds up to proper homotopy.  We obtain infinitely many $M$ with this property since
\[
	\rci{2}{M(p_1,p_2,\dots,p_m)}{\Z} \cong \bigoplus_{i=1}^m \Z_{p_i}.
\]

\item\label{itm:infinite_ladders} Fix $p_1, p_2, \dots $ to be distinct positive primes.  Define $M\pa{p_1,p_2,\dots}$ in analogy with the previous item.  Similarly, one may verify that the CSI of $M\pa{p_1,p_2,\dots}$ with itself yields countably infinitely many distinct manifolds up to proper homotopy.  We obtain infinitely many $M$ with this property since
\[
	\rci{2}{M(p_1,p_2,\dots)}{\Z} \cong \prod_{i=1}^\infty \Z_{p_i}.
\]
\end{enumerate}

\begin{remark}\label{rmk:Henry_King}
    In items \ref{itm:two_ladders}--\ref{itm:infinite_ladders}, we computed the cohomology algebras at infinity of generalized ladder manifolds using the geometric techniques of Section~\ref{sec:ladders}.
\end{remark}

\section{Appendix: CSI and $\R^3$}
\label{sec:appendix}

The purpose of this appendix is to give a simple proof in the smooth category that a CSI of $\R^3$ with itself need not yield $\R^3$.

Fix a smooth, proper embedding $f\colon\R^2\to\R^3$.
Let $H:=\im f$, a hyperplane in $\R^3$.
Let $A$ and $B$ denote the closures in $\R^3$ of the two components of $\R^3-H$.
So, $\partial A=\partial B=H$, $A\cap B=H$, and $A\cup B=\R^3$.
As we are interested in $H$ up to ambient isotopy of $\R^3$, we assume $f(0)=0$.

\begin{definition}[Nice $2$-disk]\label{def:nice_disk}
        A $2$-disk $D\subset\R^3$ is \emph{nice} provided: (i) $D$ is neatly embedded in $A$ or in $B$, and (ii) $\partial D$ is essential in $H-\cpa{0}$.
\end{definition}

\begin{lemma}\label{lem:nice_disk}
        Let $K\subset \R^3$ be compact. Then, there exists a nice $2$-disk $D\subset \R^3 -K$.
\end{lemma}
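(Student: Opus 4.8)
The plan is to realize the nice disk as, in effect, a hemisphere of a large round sphere. Fix a closed round ball $\bar B_R$ centered at the origin with $K \cup \cpa{0} \subset \Int \bar B_R$, and, after perturbing $R$ by Sard's theorem, arrange that $S_R := \partial \bar B_R$ is transverse to $H$. This choice does two jobs at once: first, $S_R \cap K = \emptyset$, so \emph{any} disk lying on $S_R$ automatically avoids $K$; second, $\Sigma := S_R \cap H$ is a disjoint union of smoothly embedded circles, and each component of $S_R \setminus \Sigma$, being connected and disjoint from $H$, lies entirely in $\Int A$ or in $\Int B$. Thus every such region is a neatly embedded planar surface in $A$ or in $B$ that misses $K$, and it only remains to locate one that is a \emph{disk} whose boundary is essential in $H - \cpa{0}$.

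First I would show essential circles are present. Since $f$ is proper, $P := f^{-1}(\bar B_R)$ is a compact planar subsurface of $\R^2$ containing $0$, with $\partial P = f^{-1}(\Sigma)$. The component $P_0$ of $P$ containing $0$ is a compact connected planar surface, so its outer boundary (the frontier of the unbounded complementary region of $P_0$) is a single circle enclosing $0$; its image under $f$ is a circle of $\Sigma$ enclosing $f(0) = 0$ in $H$, hence essential in $H - \cpa{0}$. Call such a circle a \emph{star circle}; we have just produced one. Moreover, two disjoint circles that both enclose $0$ in the plane $H$ must be nested, so the star circles are linearly ordered by nesting about $0$.

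The heart of the argument is then an innermost-disk reduction reconciling the two planar structures that $\Sigma$ carries: its embedding in the sphere $S_R$ and its embedding in the plane $H \cong \R^2$. Beginning from a star circle and repeatedly passing to an innermost circle of $\Sigma$ on $S_R$, while using the nesting of star circles in $H$ to keep track of which circles still enclose $0$, I would produce a component $E$ of $S_R \setminus \Sigma$ that is an open disk bounded by a single star circle. Its closure $\bar E$ is then a neatly embedded $2$-disk in $A$ or in $B$, disjoint from $K$, with $\partial \bar E$ essential in $H - \cpa{0}$; that is, $\bar E$ is a nice disk in $\R^3 - K$.

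The hard part will be exactly this last reconciliation: a circle of $\Sigma$ may be innermost on $S_R$ yet inessential in $H$, or essential in $H$ yet non-innermost on $S_R$, because nesting in $S^2 \cong S_R$ and nesting in $\R^2 \cong H$ are genuinely different combinatorial structures on the same family of circles. To force the innermost reduction on the sphere to terminate at a region whose boundary still encircles $0$, I would track, along a proper ray from $0$ to infinity in $H$, the last circle of $\Sigma$ that the ray meets: this circle borders the unbounded complementary region of $\Sigma$ in $H$, so it is a star circle lying on the ``outward'' side, and I would then take the innermost disk of $S_R$ on that outward side. Verifying that this yields a genuine disk region in a single side $A$ or $B$ is the step demanding the most care.
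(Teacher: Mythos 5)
Your setup is fine up through the production of essential (``star'') circles: with $S_R$ transverse to $H$ and $K\cup\cpa{0}$ inside, every component of $S_R\setminus\Sigma$ closes up to a neatly embedded planar surface in $A$ or $B$ that misses $K$, and the outer boundary of the component of $f^{-1}(\bar B_R)$ containing $0$ maps to a circle of $\Sigma$ essential in $H-\cpa{0}$. The gap is the final step, and it is not a matter of care: it is simply false that some component of $S_R\setminus\Sigma$ must be a disk bounded by an essential circle, and no choice of ray or of ``outward side'' repairs this. Concretely, let $H$ be a flat plane through $0$ modified by two small fingers based at points $q_2,q_3\neq 0$ inside $S_R$, one poked up through the northern hemisphere of $S_R$ and one poked down through the southern hemisphere. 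Then $\Sigma$ consists of the equator $C_1$ (essential in $H-\cpa{0}$) and two small circles $C_2,C_3$ (inessential), one in each open hemisphere. The components of $S_R\setminus\Sigma$ are two disks, bounded by $C_2$ and by $C_3$, and two annuli, each having $C_1$ as one boundary circle: every disk region has inessential boundary, and every region with essential boundary fails to be a disk. (The hemisphere bounded by $C_1$ is not nice because its interior meets $H$ along $C_2$ or $C_3$.) Your procedure therefore terminates at a neatly embedded disk violating condition (ii) of Definition~\ref{def:nice_disk}; and $H$ can be built with such fingers at all scales, so enlarging $R$ does not help.

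What is missing is a mechanism for destroying the inessential circles of $S\cap H$, and that is precisely what the paper's proof adds. It first enlarges $K$ to a \emph{connected} compact set $K'=K\cup f(B^2)$ containing a disk of $H$ about $0$; then, given an intersection circle $C$ that is innermost among those inessential in $H-\cpa{0}$, the disk $\Delta\subset H-\cpa{0}$ it bounds is disjoint from $K'$, each of $\Delta\cup D_1$ and $\Delta\cup D_2$ (with $D_1,D_2\subset S$ the two disks bounded by $C$ on $S$) bounds a $3$-ball, and connectedness of $K'$ puts $K'$ inside exactly one of them; isotoping $S$ across the other $3$-ball eliminates $C$ from $S\cap H$ while keeping $K'$ inside $S$. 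After finitely many such moves all circles of $S\cap H$ are essential, and only then is an innermost disk on $S$ automatically nice. If you graft this isotopy step (together with the enlargement to $K'$, which is what guarantees both that $\Delta$ avoids $K'$ and that $K'$ lies on one definite side) onto your setup, your argument becomes the paper's; without it, the approach has a genuine counterexample.
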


\begin{proof}
Let $D^3\subset\R^3$ be a $3$-disk centered at $0$ and containing $K$.
By replacing $K$ with $D^3$, we may assume $K$ is connected.
Let $B^2\subset \R^2$ be a disk centered at 0 containing $f^{-1}(K)$.
Let $K':= K\cup f(B^2)$, which is compact and connected.
Let $S\subset \R^3$ be a $2$-sphere such that $K'$ lies inside $S$, and $S$ meets $H$ transversely.
So, $S\cap H$ is a finite disjoint union of circles disjoint from $K'$, at least one of which is essential in $H-\cpa{0}$.
If there exist components of $S\cap H$ that are inessential in $H-\cpa{0}$,
then let $C$ be one that is innermost in $H-\cpa{0}$.
Then, $C$ bounds $2$-disks $\Delta\subset H-\cpa{0}$ and $D_1,D_2\subset S$ (see Figure~\ref{fig:Delta}).
Note that $\Delta$ is disjoint from $K'$.
\begin{figure}[htbp!]
    \centerline{\includegraphics[scale=1.0]{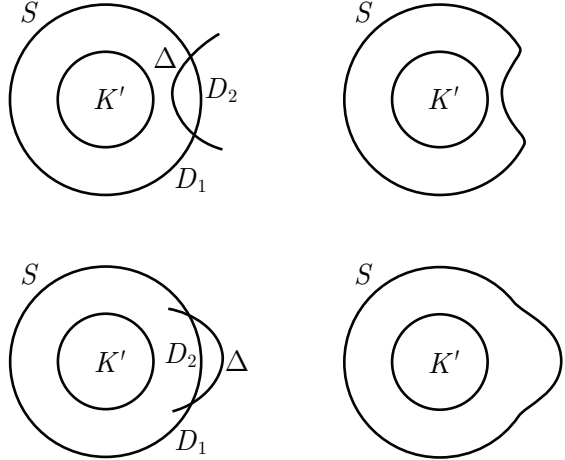}}
    \caption{Two possibilities: $\Delta$ inside or outside $S$ (left) and resulting $2$-sphere $S$ after isotopy across $B_2$ (right).}
\label{fig:Delta}
\end{figure}
Each of $\Delta\cup D_1$ and $\Delta\cup D_2$ is a (piecewise smooth) embedded $2$-sphere in $\R^3$.
Let $B_1$ and $B_2$ be the $3$-disks in $\R^3$ with boundaries $\Delta\cup D_1$ and $\Delta\cup D_2$ respectively.
As $K'$ is connected, $K'\subset\Int B_1$ or $K'\subset\Int B_2$, but not both.
Without loss of generality, assume $K'$ lies in $\Int B_1$.
Using $B_2$, isotop $D_2$ past $\Delta$ to a parallel copy of $\Delta$.
The hyperplane $H$ is fixed in the background during this isotopy of $S$.
The isotoped sphere is again called $S$.
Note that $K'$ remains inside $S$ and $C$ has been eliminated from $S\cap H$.
Repeat this procedure until all components of $S\cap H$ are essential in $H-\cpa{0}$.
Now, let $C$ be a component of $S\cap H$ that bounds a disk $D\subset S$ disjoint from the other components of $S\cap H$.
The disk $D$ is nice.
\end{proof}

Let $\R^3_+$ denote closed upper half-space.

\begin{lemma}\label{lem:AB_std}
        Either $A\approx\R^3_+$ or $B\approx\R^3_+$. 
\end{lemma}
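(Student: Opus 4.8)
The plan is to show that whichever side of $H$ admits arbitrarily distant nice disks is diffeomorphic to $\R^3_+$, and then to produce such a side by pigeonhole. First I would fix the round exhaustion $K_i := \cpa{x\in\R^3 : \norm{x}\leq i}$ and apply Lemma~\ref{lem:nice_disk} to get a nice disk $D_i \subset \R^3 - K_i$ for each $i$. Each $D_i$ is neatly embedded in $A$ or in $B$, so infinitely many lie on a common side; relabel so that this side is $A$. Since every compact set lies in some $K_i$, this shows $A$ is \emph{disk-rich}: for every compact $C\subset\R^3$ there is a nice disk in $A$ contained in $\R^3 - C$. It then suffices to prove that a disk-rich side is diffeomorphic to $\R^3_+$.

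So assume $A$ is disk-rich. The key construction is an exhaustion of $A$ by smooth $3$-balls meeting $H$ in standard $2$-disks. Given a nice disk $D\subset A$ with $\partial D$ essential in $H-\cpa{0}$, the Jordan curve $\partial D$ bounds a unique disk $E\subset H$ containing $0$, and the piecewise-smooth sphere $D\cup E$ (corners smoothed along $\partial D$) bounds, by the smooth Schoenflies theorem in $\R^3$, a $3$-ball $Q$. Because $D\subset A$ and $E\subset H=\partial A$, this ball satisfies $Q\subset A$ and $Q\cap H = E$. I would build such balls inductively: having produced $Q_k$, apply disk-richness to the compact set $Q_k\cup K_{k+1}$ to obtain a nice disk in $A$ disjoint from $Q_k$ and from $K_{k+1}$, and let $Q_{k+1}$ be the resulting ball. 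A short Jordan-curve argument — using that $\partial D_{k+1}$ lies outside a round ball yet encircles $0$ — shows $E_k\subset E_{k+1}$, $Q_k\subset Q_{k+1}$, and $A\cap K_{k+1}\subset Q_{k+1}$; the last inclusion forces $\bigcup_k Q_k = A$.

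It remains to recognize the increasing union $A=\bigcup_k Q_k$ as $\R^3_+$. Each pair $\pa{Q_k,E_k}$ is a standard $3$-ball/$2$-disk pair, and the point is that the inclusions $\pa{Q_k,E_k}\hookrightarrow\pa{Q_{k+1},E_{k+1}}$ are unknotted, so that the region $R_k := Q_{k+1}-\Int Q_k$ is a collar. Concretely, $\partial R_k$ is the $2$-sphere $D_k\cup\pa{E_{k+1}-\Int E_k}\cup D_{k+1}$, and I would invoke the smooth Schoenflies theorem again — equivalently, uniqueness of collars for the smoothly embedded sphere $\partial Q_k$ sitting inside the ball $Q_{k+1}$, rel its trace on $H$ — to exhibit $R_k$ as a product between $\partial Q_k$ and $\partial Q_{k+1}$. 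Telescoping these collars and using isotopy extension, one assembles a diffeomorphism $A\to\R^3_+$ carrying $H$ to $\partial\R^3_+$ as a direct limit of diffeomorphisms of pairs.

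I expect the main obstacle to be exactly this last recognition step: showing that the nested balls sit inside one another in the standard way, rel their boundary traces on $H$, rather than merely being abstract $3$-balls. This is where the smoothness hypothesis and the low-dimensional input — the smooth Schoenflies/Alexander theorem and smooth isotopy extension — are essential, since an increasing union of $3$-balls with uncontrolled bonding maps need not be a half-space. By contrast, the pigeonhole reduction and the ball-construction steps should be comparatively routine once Lemma~\ref{lem:nice_disk} is in hand.
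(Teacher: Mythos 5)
Your proposal follows essentially the same route as the paper: pigeonhole infinitely many nice disks from Lemma~\ref{lem:nice_disk} onto one side, use the planar and $3$-dimensional smooth Schoenflies theorems to cut off a nested exhaustion of that side by standard ball/half-disk pairs, and telescope the resulting collars using extension of diffeomorphisms of $S^2$ over the $3$-ball --- exactly the construction the paper compresses into Figure~\ref{fig:exercise} and its two cited tools. One micro-correction: the literal inclusion $A\cap K_{k+1}\subset Q_{k+1}$ can fail (a component of $H\cap K_{k+1}$ far from $0$ in the intrinsic metric of $H$ may lie outside $E_{k+1}$), but the needed conclusion $\bigcup_k Q_k=A$ still holds, since any compact subset of $A$ can be joined inside $A$ to $Q_1\cup\{0\}$ by a compact set that misses $\partial Q_k$ for all sufficiently large $k$ and hence lies in $Q_k$.
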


\begin{proof}
Use Lemma~\ref{lem:nice_disk} to obtain a proper, disjoint collection $D_k$, $k\in\Z^{+}$, of
nice 2-disks.
As each $D_k$ lies in $A$ or in $B$, we may assume, without loss of generality, that infinitely
many $D_k$ lie in $A$.
\begin{figure}[htbp!]
    \centerline{\includegraphics[scale=1.0]{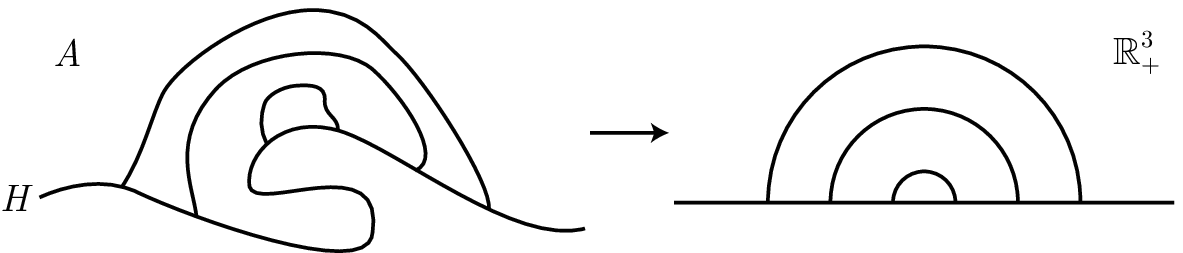}}
    \caption{Diffeomorphism $A \to \R^3_+$.}
\label{fig:exercise}
\end{figure}
To build a diffeomorphism $A\to\R^3_+$, proceed as indicated in Figure~\ref{fig:exercise} using repeatedly these tools: (i) the $2$- and $3$-dimensional smooth Schoenflies theorems \cite[Thm.~1.1]{hatcher3d},\cite[Ch.~III]{cerf}, and (ii) the fact that every diffeomorphism of $S^2$ extends to the $3$-disk \cite{munkres,smale} (see also \cite[Thm.~3.10.11]{thurston}).
\end{proof}

\begin{lemma}\label{lem:A_reg_neighb}
	Let $r\subset H$ be a ray.  Then, $A$ or $B$ is ambient isotopic to a smooth regular neighborhood
	 of $r$ in $\R^3$. In particular, the ambient isotopy class of $H$ in $\R^3$ is determined by the ambient isotopy class of $r$ in $\R^3$ and conversely.
\end{lemma}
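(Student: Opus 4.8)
The plan is to prove the main claim---that one of $A,B$ is ambient isotopic in $\R^3$ to a regular neighborhood $\nu r$ of $r$---and then read off the correspondence of ambient isotopy classes as a formal consequence. By Lemma~\ref{lem:AB_std} I may assume without loss of generality that $A\approx\R^3_+$, and I will show that this side $A$ is ambient isotopic to $\nu r$. The guiding observation is that a regular neighborhood of a ray in $\R^3$ is itself diffeomorphic to $\R^3_+$, with frontier a properly embedded plane (consistent with $\partial\nu r\approx\R^{m-1}$ in Definition~\ref{def:CSI}); hence $A$ and $\nu r$ are at least abstractly diffeomorphic, and the whole difficulty is to upgrade an abstract diffeomorphism into an honest ambient isotopy of $\R^3$ without any control over the global structure of the opposite side $B$.

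First I would put the ray into a convenient position inside $A$. Since $H=\partial A\approx\R^2$ and any two rays in $\R^2$ are ambient isotopic there (rays do not knot below dimension $3$), I can choose $\nu r$ adapted to $H$, meaning $N:=\nu r\cap H$ is a regular neighborhood (a half-infinite strip) of $r$ in $H$ and $\nu r=\nu_A\cup_N\nu_B$ with $\nu_A:=\nu r\cap A$ and $\nu_B:=\nu r\cap B$ half-tubes on either side of $H$. Under the identification $A\approx\R^3_+$ the pair $(A,r)$ becomes the standard pair, $\nu_A$ becomes the standard regular neighborhood of a straight boundary ray, and regular neighborhood theory gives that the closure of $A\setminus\nu_A$ is an external collar on the frontier of $\nu_A$ in $A$; in particular $\nu_A$ is ambient isotopic to $A$ within $A$.

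The heart of the argument is to assemble the ambient isotopy from two localized pieces, neither of which sees the global topology of $B$. Inside $A\approx\R^3_+$ I produce an isotopy that grows $\nu_A$ until it fills $A$, arranged to be product-like near $H$; on $\partial A=H$ it is an isotopy $h_t$ of $H$ expanding the strip $N$ to all of $H$. Because $H$ is bicollared in $\R^3$---it has a collar into $A$ and a collar into $B$, each of the standard form $H\times[0,1]\approx\R^2\times[0,1]$---any such $h_t$ extends to an ambient isotopy of a bicollar $H\times[-1,1]$ by $h_t\times\tn{id}$, tapered to the identity off the bicollar. This is precisely where I avoid $B$: only a collar of $H$ in $B$ is ever disturbed. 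Composing the growing isotopy on $A$ with this bicollar extension, and simultaneously retracting the small bump $\nu_B$ back onto $N\subset H$ within the same collar, yields an ambient isotopy $\Phi_t$ of $\R^3$ with $\Phi_1(\nu r)=A$. If instead $B\approx\R^3_+$, the symmetric construction gives $\Phi_1(\nu r)=B$, accounting for the ``$A$ or $B$'' in the statement.

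Finally I would deduce the ``in particular'' clause. Any ambient isotopy of $\R^3$ carrying $H$ to another hyperplane $H'$ carries the two sides of $H$ to the two sides of $H'$ and, being a diffeomorphism, sends the $\R^3_+$-side to an $\R^3_+$-side; thus the ambient isotopy class of $H$ determines that of its distinguished side, which by the main claim is the class of $\nu r$, which by uniqueness of regular neighborhoods \cite[\S3]{cks} is exactly the ambient isotopy class of $r$. Running the construction backwards---start from $r$, form $\nu r$, and set $H:=\partial\nu r$ with distinguished side $\nu r\approx\R^3_+$---gives the converse, hence the claimed bijection. I expect the main obstacle to be precisely the ambient upgrade of the third paragraph: converting the abstract diffeomorphism $A\approx\R^3_+\approx\nu r$ into an isotopy of all of $\R^3$ when $B$ may be knotted. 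The resolution is to confine every nontrivial motion to $A$ and to a bicollar of $H$, both of which are standard, so that the unknown topology of $B$ never intervenes.
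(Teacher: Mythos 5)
Your reduction to the case $A\approx\R^3_+$ via Lemma~\ref{lem:AB_std}, and your appeal to the fact that a ray lying in $H$ is standard, both match the paper's setup. The gap is in the hand-built ambient isotopy of your third paragraph. First, the advertised ``isotopy $h_t$ of $H$ expanding the strip $N$ to all of $H$'' cannot exist: $h_1$ is a diffeomorphism of $H$, so $h_1(N)=H$ forces $N=H$. More seriously, the assembled isotopy cannot accomplish the task even in principle. If $\Phi_1$ is a diffeomorphism of $\R^3$ with $\Phi_1(\nu r)=A$, then $\Phi_1$ carries the frontier $\partial \nu r$ onto the frontier $H$, carries the interior of $\nu_B\subset B$ into $\tn{Int}\, A$, and carries $(\tn{Int}\, A)\setminus \nu_A$ into $\tn{Int}\, B$; that is, it must move the half-tube $\nu_B$ and the bulk of $A$ \emph{across} $H$. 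But a composition of an isotopy supported in $A$ with an isotopy of the form $h_t\times\tn{id}$ on a bicollar $H\times[-1,1]$ (tapered to the identity) preserves each level $H\times\{u\}$, hence preserves $A$ and $B$ setwise, and so can never carry the straddling set $\nu r=\nu_A\cup_N\nu_B$ onto $A$. The step meant to supply the missing motion, ``retracting the small bump $\nu_B$ back onto $N\subset H$,'' is not available: no ambient isotopy maps the $3$-dimensional $\nu_B$ onto the $2$-dimensional $N$, and even read charitably as ``push $\nu_B$ across $H$ into $A$'' it is an unconstructed motion of essentially the same difficulty as the original problem.

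The paper sidesteps this construction entirely. It notes that $\R^3_+$ is a smooth regular neighborhood of a straight \emph{interior} ray $s$, so that $A=g^{-1}(\R^3_+)$ is a smooth regular neighborhood of $g^{-1}(s)$ in $\R^3$; since $g(r)$ lies in the plane $\R^2\times\{0\}$ it is unknotted, whence $g^{-1}(s)$ is ambient isotopic to $r$ in $\R^3$; and uniqueness of regular neighborhoods \cite[\S3]{cks} then produces the desired ambient isotopy from $A$ to $\nu r$ in one stroke. You invoke that uniqueness theorem only for the ``in particular'' clause, but it is precisely the tool that replaces your third paragraph: rerouting the main claim through it (exhibiting $A$ as a regular neighborhood of a ray ambient isotopic to $r$, rather than constructing the isotopy to $\nu r$ by hand) repairs the argument.
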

\begin{proof}
By Lemma~\ref{lem:AB_std}, either $A$ or $B$ is diffeomorphic to $\R^3_+$.  Without loss of generality, let $g\colon A \to \R^3_+$ be a diffeomorphism.  Let $s \subset \Int \R^3_+$ be any straight ray, so $\R^3_+$ is a smooth regular neighborhood of $s$ in $\R^3$. As $g(r) \subset \R^2 \times \{0\}$ is necessarily unknotted \cite[p.~1845]{cks}, $s$ is ambient isotopic to $g(r)$ in $\R^3$. It follows that $A$ is a smooth regular neighborhood of $g^{-1}(s)$ in $\R^3$ and $r$ is ambient isotopic to $g^{-1}(s)$ in $\R^3$.  This proves the first claim in the lemma.  The second claim now follows by uniqueness of regular neighborhoods \cite[\S3]{cks}.
\end{proof}

\begin{proposition}\label{prop:hyperplane_ray}
        Let $r\subset H$ be any ray. The following are equivalent:
\begin{enumerate}[label=(\arabic*)]\setcounter{enumi}{0}
\item\label{eq:H_std} $H$ is unknotted in $\R^3$.
\item\label{eq:r_std} $r$ is unknotted in $\R^3$.
\item\label{eq:AB_Rp} $A\approx\R^3_+$ and $B\approx\R^3_+$.
\end{enumerate}    
\end{proposition}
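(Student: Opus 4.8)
The plan is to prove the three statements equivalent by making $(1)\Leftrightarrow(2)$ formal, $(1)\Rightarrow(3)$ routine, and concentrating the real effort on $(3)\Rightarrow(1)$, which I expect to be the crux. Throughout, ``unknotted'' means carried to the standard hyperplane $\R^2\times\{0\}$ (respectively to a standard ray) by an ambient isotopy of $\R^3$.

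For $(1)\Leftrightarrow(2)$ I would invoke the ``in particular'' clause of Lemma~\ref{lem:A_reg_neighb}: the ambient isotopy class of $H$ in $\R^3$ and that of $r$ in $\R^3$ determine one another. Since a standard ray lies in the standard hyperplane, an ambient isotopy straightening $H$ drags $r$ to a ray in $\R^2\times\{0\}$, which is unknotted; conversely, an ambient isotopy unknotting $r$ carries $H$ to the hyperplane class determined by the standard ray, namely the standard one. For $(1)\Rightarrow(3)$ I would take an ambient isotopy $\Psi_t$ with $\Psi_1(H)=\R^2\times\{0\}$; then $\Psi_1$ carries the two complementary closed regions $A$ and $B$ diffeomorphically onto the closed upper and lower half-spaces, each of which is diffeomorphic to $\R^3_+$, giving $A\approx\R^3_+$ and $B\approx\R^3_+$.

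The substantive direction is $(3)\Rightarrow(1)$. Using that both sides are half-spaces, fix diffeomorphisms $\phi_A\colon A\to\R^3_+$ and $\phi_B\colon B\to\R^3_-$ onto the closed upper and lower half-spaces; each restricts on $H=\partial A=\partial B$ to a diffeomorphism onto $\R^2\times\{0\}$. The comparison map $g:=\phi_A\circ\phi_B^{-1}$ is a self-diffeomorphism of $\R^2$, and the point is that it can be absorbed: extending $g$ over the lower half-space by the product formula $(x,t)\mapsto(g(x),t)$ and precomposing $\phi_B$ with this extension, I obtain a diffeomorphism $B\to\R^3_-$ agreeing with $\phi_A$ along $H$. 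Gluing this modified $\phi_B$ to $\phi_A$ then yields a map $\R^3\to\R^3$ carrying $H$ to $\R^2\times\{0\}$. I would arrange the two side diffeomorphisms to respect collars of $H$ so that the glued map is genuinely smooth across $H$ rather than merely continuous. Finally, I would upgrade this diffeomorphism of the pair $(\R^3,H)$ to an ambient isotopy taking $H$ to $\R^2\times\{0\}$, after composing with a reflection in $\R^2\times\{0\}$ if necessary to make it orientation preserving and then using connectivity of $\mathrm{Diff}^+(\R^3)$; this shows $H$ is unknotted.

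I expect the main obstacle to lie in this last direction, and within it two technical points: the smoothing of the glued diffeomorphism along $H$, which I would handle by matching collars, and the passage from ``there is a diffeomorphism of $\R^3$ straightening $H$'' to ``$H$ is ambient isotopic to the standard hyperplane.'' Equivalently, the real content is that any two copies of $\R^3_+$ glued along their boundary planes reproduce the standard pair, because every self-diffeomorphism of the separating $\R^2$ extends across a half-space. This is precisely what makes hypothesis $(3)$---standardness of \emph{both} sides---rather than merely one side (which always holds by Lemma~\ref{lem:AB_std})---the exact condition forcing $H$, and hence $r$, to be unknotted.
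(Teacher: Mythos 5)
Your proposal is correct and takes essentially the same route as the paper: $(1)\Leftrightarrow(2)$ via Lemma~\ref{lem:A_reg_neighb}, $(1)\Rightarrow(3)$ as immediate, and for $(3)\Rightarrow(1)$ the identical device of forming the comparison self-diffeomorphism of $\R^2$, extending it as a product over one half-space so the two charts agree along $H$, gluing, smoothing across $H$ by collaring (Hirsch), and concluding via Milnor's result that the resulting diffeomorphism of $\R^3$ is isotopic to the identity. No substantive differences to report.
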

\begin{proof}
\ref{eq:H_std} $\Leftrightarrow$ \ref{eq:r_std} by Lemma~\ref{lem:A_reg_neighb}. \ref{eq:H_std} $\Rightarrow$ \ref{eq:AB_Rp} is obvious.  For \ref{eq:AB_Rp} $\Rightarrow$ \ref{eq:H_std}, the hypotheses give orientation preserving diffeomorphisms $g\colon A \to \R^3_+$ and $h\colon B \to \R^3_-$.  Identify $\R^2 \times \{0\}$ with $\R^2$.
Define the diffeomorphism
\[
	\psi := \left. g \circ h^{-1}\right| \colon \R^2 \to \R^2.
\]
So, the diffeomorphism
\[
	k := (\psi \times \tn{id})\circ h \colon B \to \R^3_-
\]
satisfies $\left. k\right|H = \left. g\right|H$.
Thus, we have a homeomorphism $\mu \colon \R^3 \to \R^3$ where $\left. \mu\right|A = g$ and $\left. \mu\right|B = k$ are diffeomorphisms.
By standard collaring results in Hirsch \cite[p.~184]{hirsch}, we may assume $\mu$ is a diffeomorphism and $\mu(H) = \R^2$.
The result follows since $\mu$ is isotopic to the identity by Milnor \cite[p.~34]{milnor}.
\end{proof}

\begin{example}\label{ex:example}
Let $s\subset \R^3$ be a straight ray and let $r\subset \R^3$ be a knotted ray \cite[p.~983]{foxartin}.
Clearly $(\R^3,s) \csi (\R^3,s) \approx \R^3$.
Consider
\[
	M := (\R^3,r) \csi (\R^3,r).
\]
As $r$ is knotted, $\R^3 - \Int \nu r \not\approx \R^3_+$ by Proposition~\ref{prop:hyperplane_ray}. Let $H\subset M$ be the hyperplane determined by $\partial\nu r$.  By Lemma~\ref{lem:AB_std}, $M \not\approx \R^3$.
\end{example}

\begin{remarks}
\noindent
\begin{enumerate}[label=(\arabic*),leftmargin=*]\setcounter{enumi}{0}
\item Lemma~\ref{lem:AB_std} was proved by Harrold and Moise (1953) \cite{harroldmoise} in the piecewise linear category (see also Sikkema \cite{sikkema}).  This lemma can also be deduced in the topological (locally flat) category as follows.  Consider the 2-sphere $H\cup\cpa{\infty}$ with at most one singular point embedded in $S^3=\R^3\cup\cpa{\infty}$. If neither side of the $2$-sphere is a $3$-ball, then their union cannot be $S^3$.  This can be deduced from Eaton's Mismatch Theorem \cite{eaton} and a result of Bing on tame surfaces \cite{bing}.

\item In the piecewise linear category, Myers \cite{myers} showed that the CSI of $\R^3$ with itself yields uncountably many distinct $3$-manifolds.

\item While $\R^3 - \Int \nu r \not\approx \R^3_+$ in Example~\ref{ex:example}, its interior \emph{is} diffeomorphic to $\R^3$.  In particular, $M$ is contractible.  More generally, if $L \subset \R^3$ is a smooth proper multiray with at most countably many components, then $\R^3 - L \approx \R^3$. To see this, it suffices to prove that each compact $K \subset \R^3 - L$ is contained in a ball.  So, let $K \subset \R^3 - L$ be compact.  Let $B \subset \R^3$ be a ball containing $K$.  Let $F \colon \R^3 \times [0,1] \to \R^3$ be an ambient isotopy such that: (i) $F_0 = \tn{id}$, (ii) $F_t(L) \subset L$ for each $t\in [0,1]$, (iii) $\left.F_t\right|K = \tn{id}$ for each $t\in [0,1]$, and (iv) $F_1(L)$ is disjoint from $B$.  Such an $F$ is obtained by integrating a suitable vector field tangent to $L$ and vanishing on $K$.  The required ball is $F_1^{-1}(B)$.
\end{enumerate}
\end{remarks}

\section*{Acknowledgement}
The first author thanks Larry Siebenmann for introducing him to ladder manifolds of dimension $m\geq 7$, with stringers based on products of spheres, during their earlier collaboration~\cite{cks}.


\begin{thebibliography}{99}

\bibitem[Bin61]{bing}
   R.H.~Bing,
   \emph{A surface is tame if its complement is $1$-ULC},
   Trans. Amer. Math. Soc. \textbf{101} (1961), 294--305.

\bibitem[CKS12]{cks}
	J.S.~Calcut, H.C.~King, and L.C.~Siebenmann,
	\emph{Connected sum at infinity and Cantrell-Stallings hyperplane unknotting},
	Rocky Mountain J. Math. \textbf{42} (2012),
	1803--1862.
	
\bibitem[Cer68]{cerf}
    J.~Cerf,
    \emph{Sur les diff\'eomorphismes de la sph\`ere de dimension trois $(\Gamma_{4}=0)$},
    Lecture Notes in Mathematics \textbf{53},
    Springer-Verlag, Berlin, 1968.
    
\bibitem[Eat72]{eaton}
   W.T.~Eaton,
   \emph{The sum of solid spheres},
   Michigan Math. J. \textbf{19} (1972), 193--207.
	
\bibitem[FA48]{foxartin}
	R.H.~Fox and E.~Artin,
	\emph{Some wild cells and spheres in three-dimensional space},
	Ann. of Math. (2) \textbf{49} (1948), 979--990.


\bibitem[Gom85]{gompf}
	R.E.~Gompf,
	\emph{An infinite set of exotic {${\bf R}\sp 4$}'s},
	J. Differential Geom. \textbf{21} (1985), 283--300.
	
\bibitem[Gom13]{gompf13}
	---------,
	\emph{Minimal genera of open $4$-manifolds},
	Preprint (2013).

\bibitem[GS99]{gs}
	R.E.~Gompf and A.I.~Stipsicz,
	\emph{$4$-manifolds and Kirby calculus},
	Graduate Studies in Mathematics \textbf{20},
	American Mathematical Society,
	Providence, RI, 1999.
	
\bibitem[GP74]{gp}
	V.~Guillemin and A.~Pollack,
	\emph{Differential topology},
	Prentice--Hall, Englewood Cliffs, NJ, 1974.

\bibitem[HM53]{harroldmoise}
	O.G.~Harrold, Jr. and E.E.~Moise,
	\emph{Almost locally polyhedral spheres},
	Ann. of Math. (2) \textbf{57} (1953), 575--578.

\bibitem[Hat00]{hatcher3d}
    A.~Hatcher,
    \emph{Notes on basic 3-manifold topology},
    available at
    \href{http://www.math.cornell.edu/~hatcher/3M/3M.pdf}{\curl{http://www.math.cornell.edu/$\sim$hatcher/3M/3M.pdf}}, 2000.

\bibitem[Hat02]{hatcher}
	---------,
	\emph{Algebraic topology},
	Cambridge University Press, Cambridge, 2002.
	
\bibitem[Hir76]{hirsch}
	M.W.~Hirsch,
	\emph{Differential topology},
	Springer--Verlag,
	New York,
	1994 (Corrected reprint of the 1976 original).

\bibitem[HR96]{hr}
	B.~Hughes and A.~Ranicki,
	\emph{Ends of complexes},
	Cambridge Tracts in Mathematics \textbf{123},
	Cambridge University Press,
	Cambridge, 1996.

	
\bibitem[Mil97]{milnor}
	J.W.~Milnor,
	\emph{Topology from the differentiable viewpoint},
	Princeton Landmarks in Mathematics,
	Princeton University Press,
	Princeton, NJ, 1997 (Revised reprint of the 1965 original).
	
\bibitem[Mun60]{munkres}
    J.~Munkres,
    \emph{Differentiable isotopies on the $2$-sphere},
    Michigan Math. J. \textbf{7} (1960), 193--197.

\bibitem[Mye99]{myers}
	R.~Myers,
	\emph{End sums of irreducible open {$3$}-manifolds},
	Quart. J. Math. Oxford Ser. (2) \textbf{50} (1999), 49--70.
	
\bibitem[Sch08]{schroeer}
	S.~Schr\"{o}er,
	\emph{Baer's result: The infinite product of the integers has no basis},
	Amer. Math. Monthly \textbf{115} (2008), 660--663.
	
\bibitem[Sik66]{sikkema}
   C.D.~Sikkema,
   \emph{A duality between certain spheres and arcs in $S^{3}$},
   Trans. Amer. Math. Soc. \textbf{122} (1966), 399--415.
	
\bibitem[Sma59]{smale}
    S.~Smale,
    \emph{Diffeomorphisms of the $2$-sphere},
    Proc. Amer. Math. Soc. \textbf{10} (1959), 621--626.

\bibitem[Thu97]{thurston}
    W.P.~Thurston,
    \emph{Three-dimensional geometry and topology. Vol. 1},
    Edited by Silvio Levy, Princeton University Press, Princeton, NJ, 1997.
\end{thebibliography}
\end{document}